\definecolor{myred}{RGB}{255, 71, 26}
\definecolor{myblue}{RGB}{51, 153, 255}
\definecolor{mygreen}{RGB}{0, 204, 102}
\definecolor{myorange}{RGB}{255, 200, 0}
\definecolor{mypurple}{RGB}{186, 85, 211}
\DeclareMathOperator{\diag}{diag}
\newcommand{\TheTitle}{Chebyshev Accelerated Subspace Eigensolver for Pseudo-hermitian Hamiltonians}
\newcommand{\TheShortTitle}{%
ChASE for Pseudo-hermitian Hamiltonians
}
\newcommand{\TheShortNames}{%
  E. DI NAPOLI, C. RICHEFORT, X. WU
}
\author{Edoardo Di Napoli\thanks{Jülich Supercomputing Centre, Forschungzentrum Jülich, Germany (\email{e.di.napoli@fz-juelich.de}, \email{c.richefort@fz-juelich.de}, \email{xin.wu@fz-juelich.de})} \and Clément RICHEFORT\footnotemark[1] \and  Xinzhe Wu\footnotemark[1]}
\title{{\TheTitle}}
\headers{\TheShortTitle}{\TheShortNames}
\def\Lcal{{\mathcal{L}}}
\def\Ocal{{\mathcal{O}}}
\def\Fcal{{\mathcal{F}}}
\def\be{{\bm{e}}}
\def\br{{\bm{r}}}
\def\bx{{\bm{x}}}
\def\by{{\bm{y}}}
\def\bz{{\bm{z}}}
\def\bv{{\bm{v}}}
\def\bu{{\bm{u}}}
\def\bw{{\bm{w}}}
\def\by{{\bm{y}}}
\def\nev{{\texttt{nev}}}
\def\nex{{\texttt{nex}}}
\def\nevex{{\texttt{nevex}}}
\def\deg{{\texttt{deg}}}
\def\tol{{\texttt{tol}}}
\def\locked{{\texttt{locked}}}
\def\cond{{\textup{cond}}}
\newcommand\ee{\end{equation}}
\newcommand\bea{\begin{eqnarray}}
\newcommand\eea{\end{eqnarray}}
\newcommand\bi{\begin{itemize}}
\newcommand\ei{\end{itemize}}
\newcommand\eqalign[1]{%
	\vcenter{%
		\normalbaselines \advance\baselineskip 5pt
		\advance\lineskip 5pt \tabskip=0pt
		\halign{%
			&\hfil $\displaystyle{##{}}$&
			$\displaystyle{{}##}$\hfil\cr
			#1\crcr
			}%
		}%
	}
\newcommand\dotline{\par\hbox to \hsize{\dotfill}\par}
\def\befored@t#1.#2.#3;{#1}
\def\afterd@t#1.#2.#3;{#2}
\def\refhead#1{\edef\next{\ref{#1}}\expandafter\befored@t\next..;}
\def\reftail#1{\edef\next{\ref{#1}}\expandafter\afterd@t\next..;}
\def\lsim{\mathrel{\mathpalette\@versim<}}
\def\gsim{\mathrel{\mathpalette\@versim>}}
\def\@versim#1#2{\vcenter{\offinterlineskip
        \ialign{$\m@th#1\hfil##\hfil$\crcr#2\crcr\sim\crcr } }}
\newcommand\becomes[1]{\mathchoice{\becomes@\scriptstyle{#1}}
   {\becomes@\scriptstyle{#1}} {\becomes@\scriptscriptstyle{#1}}
   {\becomes@\scriptscriptstyle{#1}}}
\def\becomes@#1#2{\mathrel{\setbox0=\hbox{$\m@th #1{\,#2\,}$}%
        \mathop{\hbox to \wd0 {\rightarrowfill}}\limits_{#2}}}
\newcommand{\defeq}{\vcentcolon=}
\newcommand{\rw}[1]{#1}
\newcommand{\wurw}[1]{#1}
\newcommand{\XZdone}[1]{}
\newcommand{\EDdone}[1]{}
\newcommand{\CRdone}[1]{}
\newcommand{\NEW}[1]{%
  \iftoggle{versiontwo}{\textcolor{black}{#1}}{}%
}
\newcommand{\OLD}[1]{%
  \iftoggle{versiontwo}{}{#1}%
}
\begin{document}

\maketitle


\begin{abstract}
Studying the optoelectronic structure of materials can require the computation of several thousands of the smallest positive eigenpairs of a pseudo-hermitian Hamiltonian. Iterative eigensolvers may be preferred over direct methods for this task since their complexity is a function of the desired fraction of the spectrum. In addition, they generally rely on highly optimized and scalable kernels such as matrix-vector multiplications that leverage the massive parallelism and the computational power of modern exascale systems. The \textit{Chebyshev Accelerated Subspace iteration Eigensolver} (ChASE) is able to compute several thousands of the most extreme eigenpairs of dense hermitian matrices with proven scalability over massive parallel accelerated clusters \cite{wu_advancing_2023}. This work presents an extension of ChASE to solve for a portion of the \OLD{spectrum}\NEW{smallest positive eigenpairs} of pseudo-hermitian Hamiltonians as they appear in the treatment of excitonic materials. By exploiting the numerical structure and spectral properties of the Hamiltonian matrix, we \NEW{preserve the characteristic positive-negative symmetry in the treatment of the eigenvectors and }propose an oblique variant of Rayleigh-Ritz projection \OLD{featuring}\NEW{that features} quadratic convergence of the Ritz values with no explicit construction of the dual basis. Additionally, we introduce a parallel implementation of the recursive matrix-product operation appearing in the Chebyshev filter with limited amount of global communications. Our development is supported by a full numerical analysis and experimental tests.
\end{abstract}

\begin{keywords}
 Pseudo-hermitian solver, Excitonic Hamiltonian, Bethe-Salpeter equation, Chebyshev filter, Oblique Rayleigh-Ritz, Quadratic convergence, GPU-enabled eigensolver
\end{keywords}
\section{Introduction}
The optical properties of materials under light stimulation are typically studied through the \textit{Bethe-Salpeter Equation} (BSE) \cite{bechstedt_many-body_2015}. For numerical solution the BSE is expressed as an eigenvalue problem represented by an excitonic Hamiltonian.
Such an Hamiltonian is of the form
\begin{equation}
    H \defeq \begin{bmatrix}
        A & B\\
        -\bar{B} & -\bar{A}
    \end{bmatrix} \qquad \text{with} \qquad A = A^* \quad \text{and} \quad B = B^T.
    \label{introduction::hamiltonian_definition}
\end{equation}
The $m \times m$ dense blocks $A$ and $B$ are respectively referred to as resonant and coupling terms. The two blocks $\bar{A}$ and $\bar{B}$ simply stand for the conjugate of $A$ and $B$. The Hamiltonian $H$ is dense, has size $n \defeq 2m$, and is termed a pseudo-hermitian matrix, as it satisfies the relation
\begin{equation}
    SH = H^*S \qquad \text{with} \qquad S \defeq \begin{bmatrix}
        I & 0 \\
        0 & -I
    \end{bmatrix}.
    \label{introduction::pseudo_hermitian_condition}
\end{equation}
This paper focuses on the most common case in which the eigenproblem 
\begin{equation}
    H\bv = \lambda\bv \quad \Leftrightarrow \quad SH\bv = \lambda S\bv \quad \text{with} \quad \bv \in \mathbb{C}^n \quad \text{and} \quad \lambda \in \mathbb{R}
    \label{eqn::form_of_eigenproblem}
\end{equation}
is ``definite'', meaning that $SH$ is \textit{hermitian positive definite} (HPD), i.e.,
\begin{equation}
    SH = \begin{bmatrix}
        A & B\\
        \bar{B} & \bar{A}
    \end{bmatrix}  \succ 0.
    \label{introduction::quasi_hermitian_condition}
\end{equation}
We will further see that the eigenvalues $\lambda$ in  \eqref{eqn::form_of_eigenproblem} are real-valued because of the positive-definiteness of $SH$. When the coupling-term $B$ can be neglected, then the pseudo-hermitian eigenproblem reduces to an hermitian eigenvalue problem for which the list of modern numerical libraries is extensive \cite{blackford_scalapack_1997,hernandez_slepc_2005,imamura_2011_eigenexa_development,marek_elpa_2014,winkelmann_chase_2019,gates_evolution_2025}. However, this approximation is not always possible and can lead to inaccurate simulation of the desired optical properties of the materials. Therefore, developing fast and scalable eigensolvers for the original pseudo-hermitian eigenproblem can have a potential important impact in the community of materials scientists studying optoelectronics. While direct eigensolvers such as ELPA introduced variants for this case \cite{penke_high_2020,penke_efficient_2022}, modern iterative eigensolvers cannot yet extract efficiently more than a few hundreds eigenpairs \cite{alvarruiz_variants_2025,milev_performances_2025}. \NEW{While the \textit{Chebyshev Accelerated Subspace iteration Eigensolver} (ChASE) already permits to compute several thousands of the most extreme eigenpairs of hermitian matrices, this paper presents} an expansion of ChASE to compute several thousands of \NEW{the smallest positive} eigenpairs of pseudo-hermitian Hamiltonians, with convergence and performance similar to the hermitian variant.

In the remaining of the paper, we give a concise state-of-the-art on eigensolvers for solving Hamiltonians arising from excitonic physics. Then, we introduce the standard workflow of ChASE in the hermitian case, and highlight each part that requires an extension to the pseudo-hermitian one. After deriving the numerical properties of the pseudo-hermitian Hamiltonians, we tackle the algorithmic upgrade for each independent component of ChASE. Finally, numerical tests are presented on large pseudo-hermitian matrices, including strong scaling experiments and parallel efficiency on massively parallel multi-GPU platforms. 
\subsection{State-of-the-art}
Studying the optical properties and the electronic structure of materials has a strategical significance. In the perspective of improving energy production and storage, one important knowledge are the energies of quasiparticles called excitons, which can be extracted by solving the BSE \cite{salpeter_relativistic_1951}, as in the Yambo code \cite{marini_yambo_2009}. This step \NEW{is} carried out by computing a few of all the eigenvalues of the Hamiltonian matrix. The coupling term $B$ is occasionally negligible, therefore the \textit{Tamm-Dancoff Approximation} (TDA) \cite{hirata1999time} reduces the pseudo-hermitian eigenvalue problem \eqref{eqn::form_of_eigenproblem} to an hermitian one of the form
\begin{equation}
	AX = X\Omega,
	\label{introduction::resonant_eigenvalue_problem}
\end{equation}
where $X$ denotes a set of eigenvectors of the hermitian resonant $A$ and $\Omega$ the diagonal matrix whose entries contain their associated eigenvalues. Here, the eigenvalues on the diagonal of $\Omega$ are real, and the columns of $X$ form an orthonormal set. 

When the full spectrum is sought after, we have to employ a direct eigensolver of $\Ocal(m^3)$ complexity. Direct eigensolvers for hermitian matrices have been extensively studied over the past decades, and several classical algorithms---including the QR algorithm, divide-and-conquer, and bisection with the inverse iteration---are now standard in the literature \cite{cuppen_divide_1980} and have been integrated into the LAPACK library \cite{anderson_lapack_1999}. 

As the Hamiltonian grows rapidly with the number of atoms in a system, standard eigensolvers face significant memory and computational challenges. Considering that big matrices cannot always fit on a shared memory platform\footnote{A complex double precision dense matrix of size $100000$ requires almost 150Gb.}, efficient parallel computations on modern large-scale distributed-memory platforms are essential.

There exist several high-performance libraries targeting the full spectrum of hermitian matrices on distributed-memory machines, including ScaLAPACK~\cite{blackford_scalapack_1997},\\ EigenEXA~\cite{imamura_2011_eigenexa_development}, SLATE~\cite{gates_evolution_2025}, and ELPA~\cite{marek_elpa_2014}. Recently, the cuSolverMp library introduced by NVIDIA as a production-level implementation of dense eigensolver on distributed-memory GPU platforms has demonstrated the capability to compute millions of eigenvectors for matrices of size exceeding $10^{6}$ on modern supercomputers \cite{tal_solving_2024}. \NEW{When only the smallest eigenpairs -- which correspond to the lowest energy levels of the system -- are of interest, computing the full spectrum is unnecessary}. Iterative methods of complexity $\Ocal(m^2)$ may provide a more suitable alternative. The SLEPc \cite{hernandez_slepc_2005} library incorporates implementations of several iterative algorithms based on Krylov procedures (e.g., Lanczos). 

Filtered subspace iterative methods, such as the one implemented in ChASE \cite{winkelmann_chase_2019}, have a different approach and target the computation of eigenvectors associated with the most extreme eigenvalues by applying a Chebyshev polynomial filter at each iteration. From the filtered subspace, a Rayleigh-Quotient of small size is constructed, and the Ritz pairs are generally computed by performing an eigendecomposition with dense algorithm (e.g., divide and conquer). Compared to traditional Krylov approaches, ChASE exhibits several advantages on modern architectures, in particular GPUs: its reliance on dense matrix–matrix multiplications (GEMMs) allows for high efficiency on accelerated hardware, its block formulation naturally improves data locality, and the algorithm is well suited to exploit parallelism at scale \cite{wu_advancing_2023}. 

While the TDA generally provides sufficiently accurate solutions to the BSE, it may become inaccurate or even physically inappropriate in certain cases \cite{gruning_exciton-plasmon_2009,olevano_excitonic_2001}. When the coupling term $B$ cannot be neglected, the full eigenproblem is written
\begin{equation}
    HV = V\Lambda,
    \label{introcution::full_eigenproblem}
\end{equation}
where $V$ is the set of right eigenvectors, each paired with an eigenvalue held by the diagonal matrix $\Lambda$. When condition \eqref{introduction::quasi_hermitian_condition} is satisfied, the eigenvalues $\Omega$ of matrix $A$ provide an upper bound for the eigenvalues  $\Lambda$ of the full Hamiltonian $H$~\cite{shao_structure_2016}. The TDA-generated shift between $\Omega$ and $\Lambda$ is generally non-negligible, indicating that the spectrum of $A$ alone may not be sufficient for accurate simulations. Moreover, because $H$ is twice as large as $A$ due to the inclusion of the coupling term $B$, the complexity of a suitable direct solver would presumably be eight times larger. The actual cost to solve the full eigenproblem depends on the choice of the specific algorithm implementation, but it would nonetheless be substantially higher than the hermitian case. The observations above motivate the development of scalable eigensolvers for pseudo-hermitian matrices, achieving performance comparable to that of hermitian solvers, which can provide significant improvements for large-scale materials science simulations.

Based on the exploitation of the pseudo-hermitian structure of $H$ in \eqref{introduction::hamiltonian_definition}, the authors of \cite{shao_structure_2016} proposed a direct method that solves the full eigenproblem \eqref{introcution::full_eigenproblem} by computing the Cholesky factorization of a real symmetric positive definite (SPD) matrix that is spectrally equivalent to $SH$. The Cholesky factors are then recycled to construct a hermitian matrix sharing the same eigenvalues as the pseudo-hermitian Hamiltonian $H$, while the eigenvectors are recovered through back transformations. This method presents two notable advantages. The spectral properties of the pseudo-hermitian matrix are preserved despite the backward error of the Cholesky factors, and that a standard direct hermitian eigensolver can be applied, bypassing the non-hermitian structure of $H$. A related approach was introduced in \cite{penke_high_2020}, where the eigendecomposition is instead carried out on a real skew-symmetric matrix, allowing the use of real arithmetic in the diagonalization step. This variant has been implemented in ELPA and has been shown to outperform the complex hermitian eigensolver in ScaLAPACK by a factor of $3$. A more extensive summary of direct eigensolvers for the BSE can be found in \cite{penke_efficient_2022}. 

On the side of iterative eigensolvers for pseudo-hermitian matrices, a straightforward option is to apply general non-hermitian methods such as bi-Lanczos or Arnoldi. These approaches are typically more expensive and fail to exploit the specific properties of $H$. \NEW{The eigenvalues of interest lie in the middle of the spectrum due positive-negative symmetry (see Theorem \ref{theorem::SH_HPD}). This scenario is particularly difficult for the convergence of Lanczos and other Krylov subspace methods. }To the best of our knowledge, the first Lanczos-based method tailored to the pseudo-hermitian Bethe-Salpeter Hamiltonian, with a computational cost comparable to the hermitian case, was introduced in \cite{gruning_implementation_2011}. A few years later, the stability of this algorithm was enhanced \cite{shao_structure_2018}, leading to improved convergence. Building upon these advances, three variants of the thick-restarted Lanczos method \cite{wu_thick-restart_2000} have been recently designed to compute a few hundred eigenpairs efficiently~\cite{alvarruiz_variants_2025, milev_performances_2025}. Numerical experiments show promising performance in terms of scalability and convergence. Nevertheless, the authors acknowledge that extending these methods to compute a much larger portion of the spectrum---potentially several thousand eigenpairs---is challenging (see \cite[Section~6]{alvarruiz_variants_2025}): \NEW{full reorthogonalization causes the computational cost to grow with the number of converged eigenvectors.}\OLD{As more eigenpairs are computed, restarted Lanczos methods tend to lose orthogonality and become numerically unstable.} The adoption of alternative strategies, such as subspace iteration, could lead to faster convergence. By using polynomial filtering techniques, which can selectively amplify the desired spectral intervals, such methods exploit high-performance computing through GEMMs and parallelism.

The goal of this work is to extend subspace iteration with polynomial filtering to solve pseudo-hermitian Hamiltonians with the goal of achieving fast and scalable convergence for the computation of several thousand \OLD{eigenpairs}\NEW{of the smallest positive eigenpairs that lie in the middle of the spectrum}. In Section \ref{sec::chase_hermtian_case}, we recall the classical workflow of subspace iteration as it is implemented in the ChASE library. Section \ref{sec::properties_of_quasihermitian} examines the spectral properties of the pseudo-hermitian Hamiltonian to identify the components that require modification. The corresponding algorithmic developments are addressed in Section~\ref{sec::chase_quasihermtian_case}, including parameter estimation in the setup phase\NEW{, the filtering with the Chebyshev polynomial,} and the construction of the Rayleigh–Quotient. In Section \ref{sec:rayleigh_ritz_variants_convergence}, we prove that our oblique variant of the Rayleigh-Ritz method enables quadratic convergence of the Ritz-values, mirroring the hermitian case. Section \ref{sec::benchmarks} reports convergence and strong scaling experiments performed on large GPU platforms and solving for pseudo-hermitian Hamiltonians of different sizes. These results demonstrate that ChASE can be successfully generalized to the pseudo-hermitian setting while preserving both efficiency and scalability.

\section{ChASE for hermitian matrices}
\label{sec::chase_hermtian_case}
 The standard ChASE algorithm computes the $\nev$ \NEW{most extreme} eigenvalues of the hermitian matrix $A$ in \eqref{introduction::resonant_eigenvalue_problem}, with their associated eigenvectors. \NEW{These extremes correspond either to the largest or to the smallest eigenvalues. In most physical applications, the eigenvalues of interest are the smallest ones, located at the extreme left end of the spectrum. The two main components of ChASE} are: (i) a well designed Chebyshev polynomial that filters the external eigenvectors, and (ii) a Rayleigh-Ritz procedure that extracts approximations of the eigenpairs from the orthonormalized filtered subspace. Let $\omega_i$ denote the $i$th smallest eigenvalue of $A$. While ChASE targets the interval $[\omega_1,\omega_{\nev}]$, the filter is constructed over the slightly enlarged interval  $[\omega_1,\omega_{\nev+\nex}]$ using a Chebyshev polynomial of degree $\deg$. This enlargement increases the rank of the search space to $\nevex \defeq \nev + \nex$, which improves the accuracy of the filter near $\omega_{\nev}$ and accelerates convergence. Conversely, increasing $\deg$ enhances the damping of unwanted eigenvectors by flattening the polynomial $p$ near zero on the complement interval $[\omega_{\nevex},\omega_m]$, where $\omega_{\nevex}~=~\omega_{\nev+\nex}$. The effectiveness of this approach depends on accurate estimates of the spectral bounds $\omega_1,\omega_{\nevex}$ and $\omega_m$, which are respectively approximated by the parameters $\mu_1$, $\mu_{\nevex}$ and $\mu_m$ obtained from a few Lanczos iterations (25 is the default value). The smallest parameter $\mu_1$ is taken as the smallest approximated eigenvalue from Lanczos iterations, and the largest parameter $\mu_m$ is an upper bound of the spectrum based on \cite{ZHOU2011480}. The $\mu_{\nevex}$ value is estimated by integrating an approximation of the spectral density function \cite{lin_approximating_2016}, so that the interval $[\mu_1,\mu_{\nevex}]$ covers $\nicefrac{\nevex}{m}$ of the spectral weight. The resulting polynomial filter is illustrated in Figure \ref{fig:chase_polynomial}.
\begin{figure}[h!]
    \centering\captionsetup{justification=centering}
    \includegraphics[scale=0.9]{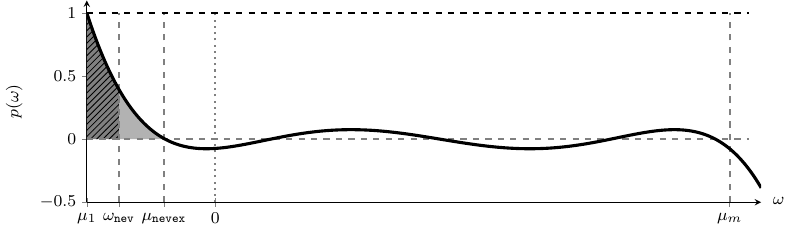}
    \caption{Chebsyhev polynomial filter - the gray filling corresponds to the filtered area and the dashed pattern represents the area of target eigenpairs.}
    \label{fig:chase_polynomial}
\end{figure}

The search space $W$ is initialized with random vectors drawn from a normal distribution. At each iteration, $W$ is \OLD{filtered and subsequently orthonormalized using either a distributed Cholesky QR or a block Householder QR algorithm, depending on its condition number.}\NEW{filtered via the three-term recurrence relation
\begin{equation}\label{eqn::three-term_recurrence}
    W_{i+1} \defeq \alpha_{i}AW_{i} + \beta_{i-1}W_{i-1}\quad \text{with}\quad i \leq \deg \quad \text{and} \quad W_1 \defeq AW_0,\quad W_0 \defeq W, 
\end{equation}
and where $\alpha_i$ and $\beta_{i-1}$ are scalars. The filtered space is subsequently orthonormalized using either a distributed Cholesky QR or a block Householder QR algorithm, depending on its condition number \cite{napoli_estimating_2026}.} In both cases, the orthonormalized vectors are collected in the $m \times \nevex$ matrix $Q$. ChASE proceeds with a Rayleigh-Ritz step to compute approximate eigenpairs: A Ritz pair $(\Tilde{\omega}, \Tilde{\bx})$ approximates an exact eigenpair $(\omega, \bx)$, where $\Tilde{\omega} \in \mathbb{R}$ and $\Tilde{\bx} \in \mathrm{range}(Q)$, with $\omega$ an eigenvalue in the diagonal matrix $\Omega$ of \eqref{introduction::resonant_eigenvalue_problem}, and $\bx$ an eigenvector from the set $X$. Each Ritz-pair should satisfy the following Galerkin condition (see e.g. \cite{saad_numerical_2011})
\begin{equation}\label{eqn::rayleigh-ritz_as_an_orthogonal_projection}
    \Tilde{\by}^*\left(A\Tilde{\bx} - \Tilde{\omega}\vphantom{\Tilde{\lambda}}\Tilde{\bx}\right) = 0, \quad \forall\Tilde{\by}\in \text{range}(Q).
\end{equation}

The orthonormal basis $Q$ defines the hermitian Rayleigh-Quotient $G \defeq Q^*AQ$ , from which the Ritz pairs are obtained by solving the reduced problem $GZ = Z\Tilde{\Omega}$. The Ritz values are given directly by the diagonal entries of $\Tilde{\Omega}$, while the corresponding Ritz vectors $\Tilde{X}$ are recovered as $\Tilde{X}~\defeq~QZ$, so that each $\Tilde{\bx}$ is a column of $\Tilde{X}$. In the hermitian case, Cauchy's interlace theorem guarantees that the eigenvalues of $G$ interlace with those of $A$, which facilitates locating the Ritz values $\Tilde{\Omega}$. An approximate eigenvector $\Tilde{\bx}$ can be decomposed as the linear combination $\Tilde{\bx} = \gamma \bx + \sigma \be$, where $\bx$ is the target eigenvector and $\be\perp\bx$. In the hermitian case, a key property established in \cite{stewart_matrix_2001} is that the orthogonal Rayleigh–Ritz procedure converges quadratically with respect to $\sigma$, in the sense that
\begin{equation}
    \big|\omega - \Tilde{\omega}\big| \leq \Ocal\big(\sigma^2\big).
    \label{eqn::hermitian_rayleigh-ritz_quadratic}
\end{equation}

This quadratic convergence is central to the efficiency of ChASE, as it ensures the extremely rapid convergence of Ritz values with sufficiently accurate filtered subspace. In Section \ref{sec:rayleigh_ritz_variants_convergence}, dedicated to Rayleigh-Ritz procedure, we demonstrate that this desirable convergence property can be replicated in the pseudo-hermitian case. Preserving this feature is a cornerstone for achieving fast convergence. The final step of each iteration then consists of computing the residuals $\br_i~\defeq~||A\Tilde{X}_{:,i}~-~\Tilde{\omega}_i\Tilde{X}_{:,i}||_2$ and locking each Ritz vector $\Tilde{X}_{:,i}$ that satisfies $\br_i \leq \tol$, where $\tol$ is a prescribed input tolerance. The number of locked vectors $\locked$ is incremented accordingly. The subspace iteration proceeds until all $\nev$ desired smallest eigenvalues are locked (i.e., $\locked = \nev$) or the prescribed maximal number of iterations is reached.

In what follows, we define $k \defeq \nevex - \locked$ as the number of non-converged eigenvectors. Each new iteration reuses the non-locked columns of $\Tilde{X}$ to form a reduced search space $W$ of size $k$, which is re-orthonormalized against the locked vectors to preserve orthogonality. The standard workflow for hermitian matrices is summarized in Algorithm \ref{algortihm::hermitian_chase}, while details of the locking strategy are discussed in \cite{winkelmann_chase_2019}.

\newlength{\dlen} \settowidth{\dlen}{{\sc Check for convergence}} 
\newlength{\ylen} \settowidth{\ylen}{Y}

\newlength{\clen}
\settowidth{\clen}{\small{\sc Deflation \& Locking} (Start)}

\renewcommand{\algorithmicrequire}{\textbf{Input:}}
\renewcommand{\algorithmicensure}{\textbf{Output:}}

\begin{algorithm}[h!t]
  \caption{Standard workflow of ChASE for hermitian matrices}
  \begin{algorithmic}[1]
  \Require hermitian matrix $A$ of size $m \times m$, $\nev$, and $\nex$
  \Ensure $\left(\Omega_{\nev},X_{\nev}\right)$ with
  $\Omega_{\nev} = \diag(\omega_1, \ldots,\omega_{\nev})$ and $X_{\nev} = \left[
    \bx_1, \ldots, \bx_{\nev}\right]$
  \State $W\gets {\rm randn}(m,\nevex)$
  \State Estimate $\omega_1$, $\omega_{\nevex}$ and $\omega_m$ with Lanczos \label{lst:line:lanczos} 
  \While{$\locked < \nev$}
  \State Filter the vectors, $W \gets p (A) W$ \label{lst:line:cheby} 
  \State Orthonormalize, $Q \gets \big[Y,W\big] R^{-1}$ \label{lst:line:VR-1} 
  \State Select non-locked space only, $Q \gets Q_{:,\locked:\nevex}$ \label{lst:line:VR-2} 
  \State Compute Rayleigh quotient, $G \gets Q^*AQ$ \label{lst:line:rrstarts} 
  \State Solve the reduced problem with \texttt{HEEVD}, $G Z = Z \tilde{\Omega}$\label{lst:line:rrreduced} 
  \State Compute $W \gets Q Z$ \label{lst:line:rrends}
  \State Compute the residuals, Res$\big(\Tilde{\Omega}, W\big)$ \label{lst:line:resids}
  \State Lock the converged eigenpairs, $(\Omega, \Tilde{X}, W) \gets \text{Locking}\big(\Tilde{\Omega}, W\big)$
    \State $\locked \gets \text{size}(\Tilde{X})$
  \EndWhile
  \State $\Omega_{\nev}, X_{\nev} \gets \Omega, \Tilde{X} $ 
\end{algorithmic}
\label{algortihm::hermitian_chase}
\end{algorithm}

\section{Properties of pseudo-hermitian Hamiltonians}
\label{sec::properties_of_quasihermitian} In this section, we recall the main properties of pseudo-hermitian Hamiltonians to identify which components of ChASE require modification. Let $\bu$ and $\bv$ denote respectively the left and right eigenvectors associated with the eigenvalue $\lambda$ of $H$, such that
\begin{equation}
	\bu^*H = \lambda\bu^*\qquad \text{and} \qquad H\bv = \lambda\bv.
	\label{eqn::general_eigenvalues_relation}
\end{equation}
The eigenvalues of a pseudo-hermitian Hamiltonian come in quadruplets\OLD{ (see \cite[Theorem~1]{benner_remarks_2018})}\NEW{}. Specifically, if $\lambda$ is an eigenvalue of $H$,  then $\{\lambda,\bar{\lambda},-\lambda$,$-\bar{\lambda}\}$ also belong to the spectrum, whose associated eigenvectors satisfy the following theorem.
\begin{theorem}\label{theorem:pseudo-hermitian_hamiltonian_eigenvalues_relation}
The quadruplet $\{\lambda,\bar{\lambda},-\lambda$,$-\bar{\lambda}\}$ belongs to the spectrum of $H$, with\begin{equation}\label{eqn::general_symmetry}
	HS\bu = \bar{\lambda}S\bu \qquad , \qquad HJ \bar{\bu} = -\lambda J\bar{\bu} \qquad , \qquad HK\bar{\bv} = -\bar{\lambda}K\bar{\bv},
\end{equation}
where $K$ and $J$ are defined as follows
\begin{equation}
	K \defeq
	\begin{bmatrix}
		0 & I\\
		I & 0
	\end{bmatrix} \qquad \text{and} \qquad 	J \defeq \begin{bmatrix}
		0 & I\\
		-I & 0
	\end{bmatrix}.
\end{equation}
\end{theorem}\NEW{
\begin{proof}
    See \cite[Theorem~1]{benner_remarks_2018}.
\end{proof}}
An important implication of Theorem \ref{theorem:pseudo-hermitian_hamiltonian_eigenvalues_relation} is that one can deduce the\OLD{ most extreme} negative \OLD{eigenpairs}\NEW{right eigenvectors} from the \OLD{most }positive \OLD{ones}\NEW{left ones}, and vice-versa. Therefore, computing half \OLD{ eigenpairs is sufficient to recover the entire spectrum}\NEW{of the spectrum is sufficient}. 

When the condition \eqref{introduction::quasi_hermitian_condition} holds, the eigenvalues of $H$ are real. In this case, we can establish a direct link between the left and right eigenvectors via the matrix $S$ defined in \eqref{introduction::pseudo_hermitian_condition}. 
\begin{theorem}\label{theorem::SH_HPD}
	If $SH$ is HPD, then the eigenvalues of $H$ are real, and the left and right eigenvectors are related by
	\begin{equation}\label{theorem::U_and_V_relation}
		\lambda = \bar{\lambda} \qquad \Leftrightarrow \qquad \bu = S\bv.
	\end{equation}
\end{theorem}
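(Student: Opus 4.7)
The plan is to prove the two claims separately: first the reality of the spectrum, then the relation $\bu = S\bv$.

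For the reality of the eigenvalues, I would exploit the HPD assumption on $SH$ through a Cholesky factorization $SH = LL^*$ with $L$ invertible. Starting from $H\bv = \lambda \bv$, left-multiplying by $S$ and using $S^2 = I$, I obtain $LL^*\bv = \lambda S \bv$. Setting $\bw \defeq L^*\bv$ (with $\bv = L^{-*}\bw$) and rearranging yields
\begin{equation}
    L^{-1}SL^{-*}\bw = \lambda^{-1}\bw.
\end{equation}
The matrix $L^{-1}SL^{-*}$ is hermitian because $S$ is hermitian, so its eigenvalues are real. Since $SH$ is HPD it is invertible and hence $H$ is invertible, which rules out $\lambda = 0$. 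Therefore $\lambda^{-1}$ is real and so is $\lambda$.

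For the eigenvector relation, the plan is to use the pseudo-hermiticity condition \eqref{introduction::pseudo_hermitian_condition}, which combined with $S^2 = I$ gives the equivalent form $H^* = SHS$. Taking the conjugate transpose of $H\bv = \lambda \bv$ produces $\bv^* H^* = \bar{\lambda}\,\bv^*$, and substituting $H^* = SHS$ followed by right-multiplication by $S$ yields
\begin{equation}
    (S\bv)^* H = \bar{\lambda}\, (S\bv)^*.
\end{equation}
Thus $S\bv$ is automatically a left eigenvector of $H$ associated with $\bar\lambda$. For the forward direction ($\lambda = \bar\lambda \Rightarrow \bu = S\bv$), combining the first part with this identity shows that $S\bv$ satisfies the defining equation $(S\bv)^*H = \lambda(S\bv)^*$, so it qualifies as the left eigenvector $\bu$. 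Conversely, assuming $\bu = S\bv$, the same chain of identities gives $\bu^*H = \bar\lambda\,\bu^*$, and comparing with the defining $\bu^*H = \lambda\,\bu^*$ forces $\lambda = \bar\lambda$ since $\bu \neq 0$.

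The only delicate point I anticipate is the interpretation of the equality $\bu = S\bv$ when $\lambda$ is not simple: the left eigenspace could have dimension greater than one, and strictly speaking $S\bv$ is merely a valid choice of left eigenvector rather than the unique one. I would either state this caveat explicitly, or equivalently rephrase the claim as saying $S\bv$ spans (or belongs to) the left eigenspace associated with $\lambda$. Apart from this bookkeeping remark, both parts reduce to short algebraic manipulations once the Cholesky trick is in hand, and no additional spectral machinery is required.
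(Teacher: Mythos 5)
Your proof is correct and rests on the same central device as the paper's proof, namely the Cholesky factorization $SH = LL^*$ and the observation that $H$ is similar (via $L^*$) to a Hermitian matrix. There is one cosmetic difference: the paper works with $L^*SL$, which is directly similar to $H$ and so has eigenvalues $\lambda$, whereas you work with $L^{-1}SL^{-*}$, which has eigenvalues $\lambda^{-1}$ and therefore requires the extra (correct, but avoidable) observation that $\lambda \neq 0$. For the eigenvector relation the paper simply invokes its earlier Theorem \ref{theorem:pseudo-hermitian_hamiltonian_eigenvalues_relation}, which supplies the identity $HS\bu = \bar{\lambda}S\bu$, and then matches it against $H\bv = \lambda\bv$; you re-derive the equivalent fact directly from $H^* = SHS$, which is a self-contained version of the same step. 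Your proof is in fact slightly more complete than the paper's, since you argue both directions of the equivalence, while the paper only writes out one implication. Your closing caveat about non-simple eigenvalues is a genuine point that the paper glosses over: when $\lambda$ has geometric multiplicity greater than one, the statement $\bu = S\bv$ should really be read as ``$S\bv$ lies in the left eigenspace,'' or equivalently as the subspace identity $U = SV$ that the paper uses later in \eqref{eqn::bi-orthogonality}. Flagging it explicitly, as you do, is the more careful formulation.
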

\begin{proof}
    Since $SH$ is HPD, it admits a Cholesky factorization $SH = LL^*$ with $L$ lower triangular factor. Observing that $SS = I$, the eigenvalue problem \eqref{eqn::general_eigenvalues_relation} can then be written as
	\begin{equation}
		SSH\bv~=~\lambda\bv,
	\end{equation}
which is equivalent to the hermitian eigenvalue problem
	\begin{equation}
		L^*SL\by = \lambda \by, \qquad \by = L^*\bv.
	\end{equation}
Hence, $\lambda$ is real, i.e., $\lambda = \bar{\lambda}$. Furthermore, from Theorem \ref{theorem:pseudo-hermitian_hamiltonian_eigenvalues_relation} we know that $HS\bu = \lambda S\bu$ which can be interpreted as $H\bv = \lambda\bv$ with $S\bu = \bv$.
\end{proof}
Theorem \ref{theorem::SH_HPD} has \OLD{two}\NEW{three} immediate consequences. First, the left eigenvectors are obtained from the right ones by the action of $S$. Since $S$ has the block-diagonal structure prescribed by \eqref{introduction::pseudo_hermitian_condition}, this amounts to flipping the sign of the lower block of each right eigenvector. Explicit computation of left eigenvectors is therefore unnecessary: they can be deduced at negligible cost from the right eigenvectors. \NEW{Second, the negative right eigenvectors can be directly deduced from the positive right ones, such that
\begin{equation}
    H\bv = \lambda\bv \qquad \Leftrightarrow \qquad   HK\bar{\bv} = -\lambda K\bar{\bv}. \label{eqn::positive-negative_right_eigenvectors}
\end{equation}}\NEW{Third}\OLD{Second}, all eigenvalues lie on the real axis. In \cite[Theorem 4]{shao_structure_2016}, the authors demonstrate that the eigenvalues of $A$ provides an upper bound for the positive eigenvalues of $H$. Then, the magnitude of the eigenvalues is bounded by the spectral radius of $A$, as stated in Lemma \ref{lemma::interval_of_real_eigenvalues}. 
\begin{lemma}\label{lemma::interval_of_real_eigenvalues}
	If $SH$ is HPD, then $\lambda$, the eigenvalues of $H$, satisfy
	\begin{equation}
		\lambda \in \left[-\rho(A),\rho(A)\right].
	\end{equation}
\end{lemma}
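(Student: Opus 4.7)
The plan is to combine the two theorems already established in this section. Theorem~\ref{theorem::SH_HPD} tells us that under the HPD assumption on $SH$, every eigenvalue $\lambda$ of $H$ is real, so $\lambda = \text{Re}(\lambda)$ and $\text{Im}(\lambda) = 0$. Meanwhile, Theorem~\ref{theorem::location_eigenvalues_general} bounds the real and imaginary parts of an arbitrary eigenvalue of $H$ purely in terms of the spectral radii of $A$ and $B^*B$, with no positivity assumption required.

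Concretely, I would first invoke Theorem~\ref{theorem::SH_HPD} to reduce the claim to showing $\text{Re}(\lambda) \in [-\rho(A), \rho(A)]$; the imaginary-part bound becomes vacuous because $\text{Im}(\lambda) = 0$ automatically. Then I would apply Theorem~\ref{theorem::location_eigenvalues_general} directly, which yields the desired inclusion $\text{Re}(\lambda) \in [-\rho(A), \rho(A)]$. Replacing $\text{Re}(\lambda)$ by $\lambda$ gives the conclusion.

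There is essentially no obstacle here: the lemma is a corollary, almost a remark, recording the sharpening of Theorem~\ref{theorem::location_eigenvalues_general} that becomes available when the eigenvalues are known to be real. The only thing worth spelling out is that $A$, being the leading principal block of the HPD matrix $SH$ (see \eqref{introduction::quasi_hermitian_condition}), is itself HPD, so $\rho(A)$ coincides with the largest eigenvalue of $A$ and is a positive real number; this makes the interval $[-\rho(A), \rho(A)]$ well-defined and non-degenerate. The resulting proof should fit in two or three lines.
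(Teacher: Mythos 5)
Your proposal is correct and matches the paper's (implicit) reasoning exactly: the paper does not display a formal proof of Lemma~\ref{lemma::interval_of_real_eigenvalues}, but the two sentences immediately preceding it make precisely the observation you do, namely that the realness of the spectrum from Theorem~\ref{theorem::SH_HPD} combined with the real-part bound of Theorem~\ref{theorem::location_eigenvalues_general} yields the stated interval.
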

Moreover, since $A$ is a principal block of the HPD matrix $SH$, it is itself HPD. 
\begin{justify}
In what follows, let 
\begin{equation}
    U \defeq [\bu_1 , \ldots , \bu_n], \qquad V \defeq [\bv_1 , \ldots , \bv_n]
\end{equation}
denote respectively the set of left and right normalized eigenvectors, where each $\bu_i$ and $\bv_i$ is the left and right eigenvector corresponding to the $i$th eigenvalue $\lambda_i$. 
Since $H$ is non-hermitian, neither $U$ or $V$ is orthonormal. Instead, they satisfy a bi-orthogonality relation, that the scalar product $d_{ij} \defeq \bu_i^*\bv_j$ is zero only if $i \neq j$. Using the relation $U = SV$ introduced in Theorem \ref{theorem::SH_HPD}, we obtain 
\begin{equation}
    D \defeq V^*SV  \qquad \Rightarrow \qquad H = V\Lambda D^{-1}V^*S,
    \label{eqn::bi-orthogonality}
\end{equation}
where $D$ is a diagonal matrix containing the non-zero entries $d_{ii}$ on its diagonal. Hence, $D$ is always full rank and invertible. \NEW{Indicating the sets of eigenvectors associated with positive and negative eigenvalues respectively by $V_+$ and $V_-$, define $D_+ \defeq V_+^*SV_+$ and $D_- \defeq V_-^*SV_-$. From \eqref{eqn::positive-negative_right_eigenvectors}, we also observe that
\begin{equation}
     D_- \defeq V_-^*SV_- = V_+^*KSKV_+ = -V_+^*SV_+ = -D_+.
\label{eqn::symmetry_diagonals}
\end{equation}
 Naturally, we also have $V_+^*SV_- = 0$. These features, along with the positive-negative symmetry of spectrum, will help us assessing the quadratic convergence of the Ritz values in Section \ref{sec:rayleigh_ritz_variants_convergence}.}

To conclude this section, we show that the condition number of $H$ equals the condition number of $SH$. This information will be useful for further theoretical developments.
\begin{theorem}\label{theorem::condition_number_same}
    If SH is HPD,
    \begin{equation} 
        \cond(SH) = \cond(H).
    \end{equation}
\end{theorem}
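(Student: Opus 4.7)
The proof hinges on recognizing that the matrix $S$ is not merely involutive but, in fact, both hermitian and unitary: directly from the definition in \eqref{introduction::pseudo_hermitian_condition} one has $S^* = S$ and $S^2 = I$, so $S^{-1} = S^* = S$. Since the 2-norm is unitarily invariant, any left or right multiplication by $S$ preserves singular values. The plan is therefore to reduce the claim to a one-line consequence of this invariance, after recalling that $\cond(M) = \|M\|_2\,\|M^{-1}\|_2$.

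First, I would observe that $\|SH\|_2 = \|H\|_2$. This follows because, for every $\bx$, $\|SH\bx\|_2 = \|H\bx\|_2$ by unitarity of $S$, so the induced operator norms coincide. Equivalently, $(SH)^*(SH) = H^*S^*SH = H^*H$, which shows that $SH$ and $H$ share the same singular values. Next, since $SH$ is HPD it is invertible, and so is $H$; using $S^{-1} = S$ we get $(SH)^{-1} = H^{-1}S$. Applying the same unitary-invariance argument on the right yields $\|(SH)^{-1}\|_2 = \|H^{-1}S\|_2 = \|H^{-1}\|_2$.

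Multiplying these two identities gives
\begin{equation}
\cond(SH) = \|SH\|_2\,\|(SH)^{-1}\|_2 = \|H\|_2\,\|H^{-1}\|_2 = \cond(H),
\end{equation}
which is the claim. There is no real obstacle here: the only subtlety worth flagging is that the result relies on the 2-norm condition number (so that unitary invariance applies); under a different norm the statement would require a separate argument. The HPD assumption on $SH$ is used only to guarantee that $H$ is invertible and therefore that $\cond(H)$ is well defined.
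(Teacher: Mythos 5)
Your proof is correct, but it takes a genuinely different and more elementary route than the paper's. The paper starts from the singular value decomposition $H = K\Sigma P^*$, uses the pseudo-hermitian relation $H^* = SHS$ to deduce $K = SP$, and thereby shows that $SH = P\Sigma P^*$ is already the eigendecomposition of $SH$; the equality $\cond(SH) = \cond(H)$ then falls out as $\lambda_{\max}(SH)/\lambda_{\min}(SH) = \sigma_{\max}(H)/\sigma_{\min}(H)$. You instead bypass any decomposition and argue purely from the fact that $S$ is hermitian and unitary ($S^{-1} = S^* = S$), so that left- or right-multiplication by $S$ leaves the 2-norm (equivalently, the singular values) unchanged; then $\|SH\|_2 = \|H\|_2$ and $\|(SH)^{-1}\|_2 = \|H^{-1}S\|_2 = \|H^{-1}\|_2$ immediately give the claim. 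Your argument is shorter, avoids the uniqueness-of-SVD subtleties implicit in the paper's identification $K = SP$, and reveals that the result actually holds for \emph{any} unitary $S$ and any invertible $H$ — the HPD hypothesis is needed only so that $H$ is invertible and $\cond$ is well defined. The paper's approach, however, establishes a sharper structural fact that is reused implicitly elsewhere: the eigenvalues of $SH$ coincide exactly with the singular values of $H$, not merely that the extreme ones have the same ratio. Both proofs are valid for the 2-norm condition number, which is what the paper means by $\cond(\cdot)$ throughout.
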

\begin{proof}
    Let $H = F \Sigma P^*$ be the singular value decomposition of $H$. Then, from the definition of a pseudo-hermitian matrix \eqref{introduction::pseudo_hermitian_condition}, we have $H^*~=~SHS~=~SF\Sigma P^*S$. Subsequently, it follows that $F = SP$. 
    Substituting this into the singular value decomposition of $H$ gives
    \begin{equation} 
        SH = P \Sigma P^*,
    \end{equation} 
    which is the eigendecomposition of $SH$. Hence, the condition number of $H$ satisfies
    \begin{equation}
        \cond(SH) = \frac{\lambda_{\max}(SH)}{\lambda_{\min}(SH)} = \frac{\sigma_{\max}(H)}{\sigma_{\min}(H)} = \cond(H).
    \end{equation}
\end{proof}

In summary, when $SH$ is HPD, the algorithm can leverage on the following properties: 1) The eigenvalues of $H$ are \textit{real}, the same as the hermitian case, and appear as \textit{positive-negative pairs}. 2) The \textit{left eigenvectors follow directly from the right ones through the action of $S$}, which simply flips the sign of their lower half block. 3) The right eigenvectors themselves are not orthonormal, but instead \textit{form a bi-orthogonal system with respect to $S$}. \NEW{4) The negative and positive right eigenvectors relate via \eqref{eqn::positive-negative_right_eigenvectors}.} These properties form the foundation for extending the ChASE framework to pseudo-hermitian problems, as detailed in the next section.
\end{justify}

\section{Extending ChASE to pseudo-hermitian Hamiltonians}\label{sec::chase_quasihermtian_case} The hermicity of the matrix \NEW{and the locality of the target eigenvalues at one spectral extreme} are central assumptions in the current design of ChASE. To extend the framework \NEW{to solve for the smallest positive eigenpairs of }pseudo-hermitian eigenproblems such as \eqref{introcution::full_eigenproblem}, \NEW{five} major components of the algorithm must be revised. \textit{(i) Spectral bounds estimation.} The target eigenpairs are now located in the middle of the spectrum. In Section \ref{sec::chebyshev_filter}, we design the Chebyshev polynomial on $H^2$ (at Line~\ref{lst:line:cheby}) to fold the spectrum at zero, such that all eigenvalues become positive. \NEW{Thus, t}\OLD{T}he estimation of $\lambda_1, \lambda_{\nevex}$, and $\lambda_n$ (Line~\ref{lst:line:lanczos} in Algorithm~\ref{algortihm::hermitian_chase}) \NEW{should map to the spectrum of $H^2$. In addition, our approach }is no longer based on standard hermitian Lanczos algorithm. Instead, we employ a few iterations of the pseudo-hermitian Lanczos variant developed in \cite{gruning_implementation_2011}. Details are given in Section \ref{sec::estimating_variables}. \NEW{\textit{(ii) Chebyshev filter.} As a side effect of filtering with $H^2$, obtaining the $\nev$ smallest positive eigenpairs requires doubling the size of the search-space because the negative counterparts are filtered in to the same degree. Nevertheless, we exploit the positive-negative symmetry of \eqref{eqn::positive-negative_right_eigenvectors} to restrict filtering on the positive approximate eigenvectors only. Minor adjustments in the parallel matrix-matrix products are also required to account for the pseudo-hermitian structure.} \textit{(iii), orthonormalization and locking of right eigenvectors.} After filtering the target eigenvectors associated with smallest eigenvalues, ChASE extracts the locked eigenvectors from the current search space (Lines~\ref{lst:line:VR-1}–\ref{lst:line:VR-2} in Algorithm~\ref{algortihm::hermitian_chase}). In the pseudo-hermitian case, the right eigenvectors are not orthonormal, meaning that the target vectors may remain correlated. As a result, orthonormalizing the locked columns against the remaining search space can impair convergence. Remedies for this issue are discussed in Section~\ref{sec::S_orthonormalization}. \textit{(iv), Rayleigh-Ritz procedure.} Constructing the Rayleigh quotient $G$ from a single orthonormal basis (Line~\ref{lst:line:rrstarts}) is effective in the hermitian case but performs poorly in the pseudo-hermitian setting. Instead, we adopt an oblique Rayleigh–Ritz scheme (Section~\ref{sec::rayleigh_ritz_for_pseudo_hermitian}) \NEW{which preserves the characteristic positive-negative spectral symmetry of $H$}. \NEW{This feature is crucial to distinguish between positive and negative eigenvectors which appear symmetrically in the search-space}. Section \ref{sec:rayleigh_ritz_variants_convergence} will also show that \NEW{our variant} achieves quadratic convergence of the Ritz values and allows us to form $G$ directly without explicitly constructing the dual basis. \NEW{\textit{(v), Back-transformation and residuals.} The back-transformed eigenvectors are coupled through the $K$-conjugate relation \eqref{eqn::positive-negative_right_eigenvectors}. This enforces symmetric handling of positive and negative eigenpairs throughout the iterative process, allowing back-transformation and residual computations to be performed on only half the search space. \textit{(vi), Subspace Initialization.} We conclude this section by showing that the initial subspace should be constrained to the $S$-positive manifold to ensure the stability of Rayleigh-Ritz}.

\subsection{Estimating $\lambda_1$, $\lambda_{\nevex}$ and $\lambda_n$} \label{sec::estimating_variables}
The estimates \NEW{$\mu_1$, $\mu_{\nevex}$ and $\mu_n$ approximate certain eigenvalues of the initial matrix and} provide initial spectral bounds for the filtering and iteration steps. \NEW{In our case, we construct the Chebyshev polynomial filter on $H^2$ to target the eigenvalues closest to zero. Therefore we estimate $\mu_1$, $\mu_{\nevex}$ and $\mu_n$ for $H$, and design the polynomial on the squared estimates, such that $\mu^2_1$, $\mu^2_{\nevex}$ and $\mu^2_n$ correspond to estimates of squared eigenvalues of $H^2$}.

\NEW{The estimation of these parameters} relies on a few Lanczos iterations that must be adapted for the pseudo-hermitian case. One simple approach, proposed in \cite{gruning_implementation_2011}, is to flip the sign of the lower part of the vectors after the matrix-vector product to emulate the multiplication by the $S$ matrix in \eqref{introduction::pseudo_hermitian_condition}. \NEW{As in \cite{shao_structure_2018,alvarruiz_variants_2025}, this implementation leverages a Lanczos procedure that is defined in terms of $S$-inner products.} In addition, the eigenvalues of $H$ are real and occur in symmetric opposite (positive–negative) pairs with respect to the zero. To preserve the symmetry in the spectral density, we use an even number of Lanczos iteration. More advanced variants \cite{alvarruiz_variants_2025, milev_performances_2025} improve numerical stability, but since our Lanczos stage only generates rough eigenvalue estimates, we adopt the simpler method \cite{gruning_implementation_2011}. \OLD{Moreover, in some cases, it may be beneficial to slightly overestimate the magnitude of $\lambda_{\nevex}$ and $\lambda_n$ thereby widening the filtering interval and ensuring that undesired eigenvectors are sufficiently damped.}

\NEW{The parameters $\mu_1$ and $\mu_n$ are chosen respectively as the approximate eigenvalues of $H$ with the smallest and largest magnitude, while $\mu_{\nevex}$ is obtained by integrating the spectral density only over the positive axis.} During ChASE iterations, $\mu_{\nevex}$ is updated from the largest non-converged \NEW{positive} Ritz value. This gradually narrows the filtering interval, focusing the search space on the target portion of the spectrum. \OLD{An alternative strategy could use a power method to estimate $\lambda_1$ and set $\mu_{\nevex} = 0.0$ initially, letting these values adjust during iterations. However, since the computational cost is dominated by the Chebyshev filter and the Rayleigh-Ritz procedure, investing slightly more in the initial Lanczos setup is preferable, as it improves the starting interval and can reduce the total number of iterations. This is our method of choice.}

\subsection{The Chebyshev Polynomial Filter}\label{sec::chebyshev_filter}
When the TDA applies, the lowest energy levels of the system correspond to the smallest eigenvalues of $A$. In the pseudo-hermitian case, however, the target portion of the spectrum correspond to the smallest positive eigenvalues, which lie near the center of the spectrum of $H$. Designing a new Chebyshev polynomial to filter out the largest positive eigenvalues and all the negative ones at the same time would severely hinder convergence. Instead, we implicitly square the matrix in the three-term recurrence relation \eqref{eqn::three-term_recurrence} to fold the spectrum at zero, thereby treating positive and negative eigenvalues equally. In the pseudo-hermitian case, the three-term recurrence relation becomes
\begin{equation}\label{eqn::three-term_recurrence_pseudo-hermitian}
    W_{i+1} \defeq \alpha_{i}H^2W_{i} + \beta_{i-1}W_{i-1}, \text{ with}\quad i \leq \deg \quad \text{and} \quad W_1 \defeq AW_0,\quad W_0 \defeq W.
\end{equation}
Since smallest negative and positive eigenvalues are filtered in to the same degree, \NEW{the search space will contain an approximation of both negative and positive eigenvectors. To capture the $\nevex$ smallest positive eigenvectors,} the size of the search space $W$ should be expanded to $2\times\nevex$. Denoting the search space by $W = [W_-,W_+]$, the two $n\times \nevex$ matrices $W_-$ and $W_+$ respectively correspond to the search spaces attributed to the $\nevex$ negative and $\nevex$ positive smallest eigenvectors. In practice, the size of $W$ is progressively reduced as eigenvectors converge with the number of remaining unconverged given by $k = 2\times (\nevex - \locked)$. Naively, this amounts to doubling the computational cost of the Chebyshev filter. The developments that follow demonstrate how to restrict filtering to $W_+$ while recovering the filtering for $W_-$ afterwards, thereby allowing a symmetric treatment of the two halves of $W$. Let $V_-$ and $V_+$ respectively be sets of negative and positive right eigenvectors, such that
\begin{equation}
    HV_{+} = V_{+}\Lambda_{+} \qquad \Leftrightarrow \qquad HV_{-} = V_{-}\Lambda_{-},
\end{equation}
where $\Lambda_{-}$ and $\Lambda_{+}$ are the diagonal matrices containing the associated eigenvalues on their entries. The next theorem shows that the action of the Chebyshev polynomial on $V_{-}$ relates with its application to $V_{+}$.
\begin{theorem}\label{theorem::polynomial_effect_eigenvectors}
    Let $p$ be a Chebyshev polynomial of $H^2$. Then,
    \begin{equation}
        p(H^2)V_{-} = K\overline{p(H^2)V_{+}}.
    \end{equation}
\end{theorem}
\begin{proof}
First, Equation \eqref{eqn::positive-negative_right_eigenvectors} relates the negative and positive eigenvectors such that $V_{-} = K\overline{V}_{+}$. Second, $p$ is even, which gives $p(\Lambda_{-}^2) = p([-\Lambda_{+}]^2) = p(\Lambda_{+}^2)$. Therefore, we have
\begin{equation}
    p(H^2)V_{-} = V_{-}p(\Lambda_{-}^2) = K\overline{V}_{+}p(\Lambda_{-}^2) = K\overline{V}_{+}p(\Lambda_{+}^2) = K\overline{p(H^2)V_{+}}.
\end{equation}
\end{proof}
The next theorem generalizes the latter result to the search space $W_{-}$, showing that the action of the polynomial on $W_{-}$ can be deduced from its action on $W_{+}$ if their relation replicates \eqref{eqn::positive-negative_right_eigenvectors}.
\begin{theorem}\label{theorem::relation_polynomial}
    If $W_{-}$ and $W_{+}$ are initially related such that $W_{-} = K\overline{W}_{+}$, then the action of the filtering on the approximate set of negative eigenvectors $W_-$ can be recovered from its application to $W_{+}$, such that
    \begin{equation}
        p(H^2)W_{-} = K\overline{p(H^2)W_{+}}.
    \end{equation}
\end{theorem}
\begin{proof}
    First, note that \eqref{eqn::positive-negative_right_eigenvectors} describes an involution such that $W_{-} = K\overline{W}_{+} \Leftrightarrow W_{+} = K\overline{W}_{-}$. Second, let $W_{-}$ and $W_{+}$ be expressed as linear combination of $V_{-}$ and~$V_{+}$,
    \begin{equation}\label{eqn::relation_vp_vm}
        W_{-} = V_{-}C_{-} + V_{+}C_{+} \quad \Leftrightarrow \quad W_{+} = K\overline{W}_{-} = V_{+}\overline{C}_{-} + V_{-}\overline{C}_{+}.
    \end{equation}
    Then, from Theorem \ref{theorem::polynomial_effect_eigenvectors}, applying the filter to $W_{-}$ leads to
    \begin{equation}
         p(H^2)W_{-} = K\left[\overline{p(H^2)V_{+}}C_{-} + \overline{p(H^2)V_{-}}C_{+}\right] = K\overline{p(H^2)W_{+}}.
    \end{equation}
    This proves the above statement.
\end{proof}
The theorem \ref{theorem::relation_polynomial} shows that the filtering can be restricted to only half the search space (by simply setting $W_0 = W_+$ in the three-term recurrence relation \eqref{eqn::three-term_recurrence_pseudo-hermitian}), the other half being recovered through the action of the $K$ matrix and a conjugate operation. Moreover, the next theorem demonstrates that the relation $W_{-} = K\overline{W}_{+}$ preserves a perfect symmetry in the distribution of negative and positive eigenvectors within the search space $W$.
\begin{theorem}\label{eqn::symmetric_contribution_range}
    Let $\Pi(W) \defeq W(W^*W)^{-1}W^*$ be the $l_2$-orthogonal projection operator onto the range of $W = [W_-,W_+]$, with $W_{-} = K\overline{W}_{+}$. Then, positive and negative eigenvectors contribute equally to the range of $W$ in the sense that
    \begin{equation}
        \forall \bv_+ \in V_+, \quad \|\Pi(W)\bv_+\|_2 = \|\Pi(W)\bv_-\|_2 \quad \text{with} \quad \bv_{-} = K\bar{\bv}_{+} \in V_-.
    \end{equation}
\end{theorem}
\begin{proof}
    First, the range of $W$ can be rewritten as
    \begin{equation}
        \text{range}(W) = \text{range}([W_+,W_-]) = \text{range}([K\overline{W}_-,K\overline{W}_+]) = \text{range}(K\overline{W}).
    \end{equation}
    Thus, developing the $l_2$-orthogonal projection operator onto the range of $W$ gives
    \begin{equation}
    \begin{aligned}
        \Pi(W) \bv_+ = \Pi(K\overline{W}) \bv_+ &= K \Pi(\overline{W}) K\bv_+ 
         &= K \Pi(\overline{W}) \bar{\bv}_-.
    \end{aligned}
    \end{equation}
    The $K$ matrix being unitary, the $l_2$-norm is preserved such that 
    \begin{equation}
        \|\Pi(W)\bv_+\|_2 = \|K \Pi(\overline{W})\bar{\bv}_-\|_2 =  \|\Pi(W)\bv_-\|_2.
    \end{equation}
\end{proof}
Theorem \ref{eqn::symmetric_contribution_range} shows that contribution of positive and negative eigenvectors in the range of the search space is symmetric. From Theorem \ref{theorem::relation_polynomial}, we observe that the symmetric contribution of the eigenvectors in $W$ is preserved through the action of the polynomial filter, i.e., the equality $p(H^2)W_{-} = K\overline{p(H^2)W_{+}}$ holds. We will see that our oblique variant of Rayleigh-Ritz preserves the same symmetry, thereby enabling an equal treatment of the approximate positive and negative eigenpairs throughout the entire workflow of ChASE.

\NEW{In addition to performing the filtering on $W_{+}$ only, one can leverage the pseudo-hermiticity of the Hamiltonian to avoid global communications similarly to the hermitian case. Indeed, the} filtering of the search space relies on a sequence of \texttt{GEMM} operations. When the search space is distributed within each column communicator of the 2D MPI grid, a straightforward parallel \texttt{GEMM} would require communicating the row blocks of the matrix to the processes associated with the same column index. In the current implementation of ChASE for hermitian problems, this communication overhead is avoided by leveraging the hermitian structure of the matrix, allowing each process to operate solely on its local data prior to a final reduction step. More precisely, the multiplication of each row of $A$ with the search subspace $Q$ is emulated by multiplying the transpose conjugate of the corresponding column blocks.

In the pseudo-hermitian setting, this approach cannot be applied directly. However, the defining relation \eqref{introduction::pseudo_hermitian_condition}, namely $H = SH^*S$, yields the equivalence
\begin{equation}
    HW_+ = \hat{W}_+ \qquad \Leftrightarrow \qquad H^*S\hat{W}_+ = S\hat{W}_+.
\end{equation}
This identity enables a communication-avoiding formulation of the filtering step in the pseudo-hermitian case. Specifically, the sign pattern encoded in $S$ is applied to the \NEW{first half of input search space $W_+$} prior to the matrix product, after which the conjugate transpose $H^*$ is multiplied locally. The desired result $\hat{W}_+$ is then recovered by re-multiplying $S$ to the intermediate output, since $\hat{W}_+ = S(S\hat{W}_+)$. In practice, all sign-flip operations corresponding to implicit multiplication by $S$, are performed in place, without additional data movement. At the end of the parallel matrix product, the allocated memory for $W_+$ temporarily contains $SW_+$ and is subsequently restored to its original signs, because $W_+$ is required thereafter. 

Compared to the hermitian case, the pseudo-hermitian formulation introduces three sign-flip operations on the lower portions of tall-and-skinny matrices: one applied to $W_+$ prior to the matrix product, one applied to the intermediate result $S\hat{W}_+$ to obtain $\hat{W}_+$, and one to restore $W_+$. The sign-flip operations are required only when the search space is distributed within the column communicators. Since the multiplication of the search space distributed within the column communicators returns a matrix distributed within the row communicators, these three sign-flip operations are applied every two multiplication with $H$ only. \NEW{Therefore, one multiplication with $H^2$ in \eqref{eqn::three-term_recurrence_pseudo-hermitian} requires three sign-flip operations on $W_+$.}

These sign-flip operations are computationally inexpensive and map efficiently to GPU architectures. Moreover, the costs of the matrix products and the flip sign operations on $W_+$ decrease over successive iterations, as the number of approximate converged eigenvector increases and the effective dimension of the search subspace $W$ shrinks correspondingly.

\subsection{Orthonormalization and locking}\label{sec::S_orthonormalization} 
In the orthogonal Rayleigh–Ritz variant, the orthonormality of the search space $Q$ is a requirement. Although we adopt the oblique variant for pseudo-hermitian problems, orthonormality remains important for constructing the dual basis. \NEW{After filtering, recall that the right half $W_{-}$ of the search space $W$ is recovered from $W_{+}$ via the relation \eqref{eqn::positive-negative_right_eigenvectors}. Including $W_{-}$ in the search space is essential to ensure that both positive and negative eigenvectors are equally represented in $W$. The subsequent Rayleigh-Ritz step then allows us to distinguish between them using the orthornomalized matrix $Q$.}

In ChASE for the hermitian case, the search space is not only orthonormalized, but also orthonormalized against all already converged eigenvectors. This improves convergence by removing the information that has already been processed. However, this strategy only applies to hermitian matrices, where eigenvectors are orthonormal. In the general non-hermitian case, removing converged vectors from the search space can be counterproductive: because the target vectors may correlate with the converged ones, such removal can prevent convergence and cause stagnation.

For the pseudo-hermitian matrices, Eq. \eqref{eqn::bi-orthogonality} indicates that the right eigenvectors $V$ are orthogonal to $SV$. Therefore, the converged eigenvectors, after flipping the sign of their lower parts, are irrelevant to the remaining non-converged vectors. To preserve convergence, the remaining search space should be orthonormalized against these sign-flipped locked eigenvectors. The only computational overhead compared to the hermitian orthonormalization step is one flip-sign operation performed at each iteration on the set of locked vectors.

\NEW{That said, the set of locked eigenvectors $Y$ should contain not only the converged positive eigenvectors, but also the negative ones. At each iteration, the new locked negative eigenvectors are obtained by applying the $K$ operator to the complex conjugate of the newly converged positive ones.} Line \ref{lst:line:VR-1} of Algorithm \ref{algortihm::hermitian_chase} becomes 
\begin{equation}
    Q \gets \big[SY_+,SY_-,W\big] R^{-1} \qquad \text{with} \qquad Y_- \defeq K\overline{Y}_+.
\end{equation} 
\subsection{The Rayleigh-Ritz Procedure}\label{sec::rayleigh_ritz_for_pseudo_hermitian} \NEW{As negative and positive eigenvectors contribute equally to the range of $  Q  $, the Rayleigh-Ritz step is what enables the separation between approximate negative and positive eigenvectors, thereby allowing us to track the desired smallest positive ones.}

For hermitian matrices, the orthogonal Rayleigh–Ritz variant guarantees quadratic convergence of Ritz values toward the exact eigenvalues. This guarantee is tied directly to the orthogonality of eigenvectors and the variational characterization of Ritz values. However, in the non-hermitian or pseudo-hermitian setting, these properties break down, and quadratic convergence is no longer ensured. This limitation motivates the \NEW{development} of an oblique variant \NEW{that preserves the positive-negative symmetry that characterizes the eigenpairs and with quadratic convergence of the Ritz values. We hereby present an oblique variant that satisfies both conditions}.

\NEW{The oblique variant of Rayleigh-Ritz} employs a dual basis $Q_{\Lcal}$ satisfying the bi-orthogonality relation $Q_{\Lcal}^*Q=I_k$ to construct a Rayleigh-Quotient of the form $G \defeq Q_{\Lcal}HQ$. The subscript ``$\Lcal$'' emphasizes that the range of $Q_{\Lcal}$ should approximate the left invariant subspace. In what follows, we show that choosing
\begin{equation}
    Q_{\Lcal} \defeq SQ\big(Q^*SQ\big)^{-1}
    \label{eqn::left_basis_hermitian_rr}
\end{equation}
leads to an oblique Rayleigh-Ritz which solves a hermitian eigenvalue problem \NEW{with positive-negative spectral symmetry}. \NEW{Yet, we assume that $Q^*SQ$ is invertible, and discuss its eventual singularity thereafter.} By construction, we have $Q_{\Lcal} ^*Q = I_k$, ensuring that the bi-orthogonality property required for Rayleigh-Ritz is satisfied. The associated Rayleigh-Quotient then reads
\begin{equation}
    G = \big(Q^*SQ\big)^{-1}Q^*SHQ.
    \label{eqn::non-hermitian_rayleigh-quotient_heevd}
\end{equation}

In general, \NEW{performing the eigendecomposition of $G$ would require a non-hermitian direct eigensolver. The next theorem shows that our oblique variant admits a spectral equivalence with a hermitian eigenvalue problem. In addition to providing a real spectrum, it will allow us using a hermitian direct eigensolver}.

\begin{theorem}\label{theorem::spectral_equivalent_hermitian_eigenvalue problem}
     Let $Q^*SHQ = LL^*$ be the Cholesky factorization, with $L$ the lower triangular factor. Then, the Ritz values $\Tilde{\lambda}$ and the corresponding right and left Ritz vectors $\bz$ and $\bw$ satisfy
    \begin{equation}\label{eqn::spectral_equivalent_hermitian_eigenvalue problem}
        L^*\big(Q^*SQ\big)^{-1}L \by = \Tilde{\lambda}\by, \quad \text{with} \quad \by = L^*\bz \quad \text{and} \quad \bw = L\by.
    \end{equation}
\end{theorem}
\begin{proof}
   Define $\by = L^*\bz$. Then, we have
    \begin{equation}
        G \bz = \big(Q^*SQ\big)^{-1}Q^*SHQ\bz = \tilde{\lambda}\bz \quad \Leftrightarrow \quad \big(Q^*SQ\big)^{-1}L\by = \Tilde{\lambda} L^{-*}\by.
    \end{equation}
    Left multiplying by $L^{*}$ leads to the hermitian eigenvalue problem \eqref{eqn::spectral_equivalent_hermitian_eigenvalue problem}, which defines $\by$. Similarly, by taking the transpose-conjugate and multiplying from the right with $L^{-*}$, the left Ritz vector is $\bw=L\by$.
\end{proof}

In agreement with the spectral properties of $H$ described in Section \ref{sec::properties_of_quasihermitian}, the spectral equivalent of $G$ with an hermitian matrix implies that the approximate eigenvalues are also real. The hermitian spectrally equivalent Rayleigh-Quotient is built without explicit construction of the dual space $Q_{\Lcal}$. The Rayleigh-Ritz method is summarized in Algorithm \ref{algortihm::pseudo-hermitian_rayleigh_quotient_construction}, where the right comments contain the name of the basic algebra kernels along with an estimation of their computational complexity. Since the index $k$ represents the number of columns of $Q$, it initially corresponds to $2\cdot\nevex$ but decreases with the number of converged locked eigenvectors (i.e., \NEW{$k = 2\cdot (\nevex - \locked)$}). Clearly, the computational cost is dominated by the first \texttt{GEMM} operation at line 1 since $ k \ll n$. \NEW{Note that the explicit inversion of $Q^*SQ$ in \eqref{eqn::spectral_equivalent_hermitian_eigenvalue problem} can be avoided by formulating the inverse eigenproblem. In practice, one constructs the inverted Rayleigh-Quotient by applying backward substitution with $L^{*}$ on the right, followed by forward substitution with $L$ on the left (see Line 7 in Algorithm \ref{algortihm::pseudo-hermitian_rayleigh_quotient_construction}).} \NEW{While never experienced in practice, one could select a different dual basis in case of singularity of  $Q^*SQ$. For instance $Q_{\Lcal} \defeq \left[SQ - Q \left(Q^*SQ - \text{diag}(Q^*SQ)\right)\right]\text{diag}^{-1}(Q^*SQ)$ is a valid choice whenever the diagonal entries of $Q^*SQ$ are nonzero. While this option does not take advantage of any spectral equivalence to an hermitian matrix and therefore require a non-hermitian direct eigensolver, it could serve as a fallback strategy to the eventual corner cases where $M \defeq Q^*SQ$ is singular.}
\begin{algorithm}[h!t]
  \caption{Sequential construction of the hermitian Rayleigh Quotient}
  \begin{algorithmic}[1]
	\State Compute $T \gets HQ$ \Comment{\texttt{BLAS GEMM} $n^2 k $}
	\State Flip the sign of the lower half $T \gets ST$ \Comment{\texttt{INTERNAL FLIP} $\frac{n}{2}k$}
	\State Compute $M_k \gets Q^*T$ \Comment{\texttt{BLAS GEMM} $nk^2$}
	\State Factorize $L_k \gets \textsc{Cholesky}(M_k)$ \Comment{\texttt{LLAPACK PORTF} $k^3$}
	\State Compute $M_k \gets I-2Q_{2}^*Q_{2} \quad ( = Q^*SQ)$ \Comment{\texttt{BLAS GEMM} $\frac{n^2}{4}k$}
	\State Copy $G_k \gets M_k$ \Comment{\texttt{BLAS COPY} $k^2$}
	\State Solve $G_k \gets L_k^{-1}(G_kL_k^{-*})$ \Comment{2 \texttt{LLAPACK TRSM} $2k^2$}
	\State Solve $G_kY_k = \Tilde{\Lambda}_kY_k$ \Comment{\texttt{LLAPACK HEEVD} $k^3$}
	\State Back-transform $\Tilde{\Lambda}_k,Y_k \gets \Tilde{\Lambda}_k^{-1},L_k^{-1}Y_k$ \Comment{\texttt{LLAPACK TRSM} $k^2$}
	\State Return $\Tilde{\Lambda},Y_k$
  \end{algorithmic}
\label{algortihm::pseudo-hermitian_rayleigh_quotient_construction}
\end{algorithm}

On top of its spectral equivalence with an hermitian eigenvalue problem, Theorem \ref{eqn::positive-negative-symmetry_oblique_RR} stipulates that our oblique variant preserves the spectral symmetry of $H$.
\begin{theorem}\label{eqn::positive-negative-symmetry_oblique_RR}
Let $W = QR$ be a decomposition of the search space, where the two halves of $W = [W_+,W_-]$ relate via $W_{-} = K\overline{W}_{+}$. Let $Z_+$ and $Z_-$ be the right eigenvectors of $G$ associated with the positive and negative eigenvalues $\tilde{\Lambda}_+$ and $\tilde{\Lambda}_-$, respectively. The spectrum of $G$ is real positive-negative symmetric, i.e.,
\begin{equation}\label{eqn::positive-negative-symmetry_oblique_RR_eq}
    \tilde{\Lambda}_- = -\tilde{\Lambda}_+,
\end{equation}
and the associated back-transformed approximate eigenvectors of $V_+$ and $V_-$ replicate the characteristic $K$-conjugate relation \eqref{eqn::positive-negative_right_eigenvectors}, such that
\begin{equation}\label{eqn::positive-negative-symmetry_oblique_RR_eq}
    QZ_- = K\overline{QZ_+}.
\end{equation}
\end{theorem}
\begin{proof}
    By construction, $G$ in \eqref{eqn::non-hermitian_rayleigh-quotient_heevd} is composed of two parts, that are
    \begin{equation}
        (Q^*SQ)^{-1} = R(W^*SW)^{-1}R^* \quad \text{and} \quad  Q^*SHQ = (R^{-1})^*W^*SHWR^{-1}.
    \end{equation}
    Therefore, computing the positive eigenpairs of $G$ is equivalent to solving the generalized eigenvalue problem of the form
    \begin{equation}\label{eqn::generalized_eigenvalue_problem}
        W^*SHW\Phi = W^*SW \Phi\tilde{\Lambda}_+ \quad \text{with} \quad Z_+ = R\Phi.
    \end{equation}
     Our target is to show that \eqref{eqn::generalized_eigenvalue_problem} can be transformed into a pseudo-hermitian generalized eigenvalue problem, hence proving the relations prescribed by Theorem \ref{eqn::positive-negative-symmetry_oblique_RR}. In this way, two first ingredients are the coefficients of the linear decomposition of $W_+$ and $W_-$ in \eqref{eqn::relation_vp_vm}, namely $C_+$ and $C_-$. Developing the first component $W^*SW$ gives
    \begin{equation*}
    \begin{aligned}
        W^*SW = \begin{bmatrix}
            C_+^*V_+^*SV_+C_+ + C_-^*V_-^*SV_-C_-& C_+^*V_+^*SV_+\overline{C}_- + C_-^*V_-^*SV_-\overline{C}_+ \\
            \overline{C}_-^*V_+SV_+C_+ +  \overline{C}_+^*V_-SV_-C_- & \overline{C}_-^*V_+SV_+\overline{C}_- + \overline{C}_+^*V_-SV_-\overline{C}_+
        \end{bmatrix}.
    \end{aligned}
    \end{equation*}
    Then, recall the definitions $D_+ \defeq V_+^*SV_+$ and $D_- \defeq V_-^*SV_-$ of \eqref{eqn::symmetry_diagonals}, where we observed that $D_- = -D_+$. Also, recall the $S$-orthogonal relation between negative and positive eigenvectors, that is $V_+^*SV_- = 0$. Plugging these definitions into the above equation, we obtain
    \begin{equation*}
    \begin{aligned}
        W^*SW = \begin{bmatrix}
                        C_+^*D_+C_+ - C_-^*D_+C_-& C_+^*D_+\overline{C}_- - C_-^*D_+\overline{C}_+ \\
                        \overline{C}_-^*D_+C_+ - \overline{C}_+^*D_+C_- & \overline{C}_-^*D_+\overline{C}_- - \overline{C}_+^*D_+\overline{C}_+
                \end{bmatrix} =
                \begin{bmatrix}
                        A_S & B_S \\
                        -\overline{B}_S & -\overline{A}_S
                \end{bmatrix},
    \end{aligned}
    \end{equation*}
    with $A_S \defeq C_+^*D_+C_+ - C_-^*D_+C_-$ and $B_S \defeq C_+^*D_+\overline{C}_- - C_-^*D_+\overline{C}_+$. The blocks $A_S$ and $B_S$ satisfy $A_S = A_S^*$ and $B_S = -B_S^T$ (e.q., $\overline{B}_S = -B_S^*$). For the second component $W^*SHW$, define the two blocks $A_H \defeq C_+^*D_+\Lambda_+C_+ + C_-^*D_+\Lambda_+C_-$ and $B_H \defeq C_+^*D_+\Lambda_+\overline{C}_- + C_-^*D_+\Lambda_+\overline{C}_+$. Similarly, its development leads to
    \begin{equation}\label{eqn::similar_SH_matrix}
        W^*SHW = \begin{bmatrix}
        A_H & B_H\\
        \overline{B}_H & \overline{A}_H
        \end{bmatrix}, \quad \text{with} \quad A_H = A_H^* \quad \text{and} \quad B_H = B_H^T.
    \end{equation}
    Then, let $S_k$ and $K_k$ respectively be the two  $k \times k$ counterparts of $S$ and $K$, i.e.,
    \begin{equation}
        S_k \defeq \begin{bmatrix}
                I_{k/2} & 0 \\
                0 & -I_{k/2}
        \end{bmatrix} \qquad \text{and} \qquad K_k \defeq \begin{bmatrix}
                0 & I_{k/2} \\
                I_{k/2} & 0
        \end{bmatrix}.
    \end{equation}
    In addition, define $H_k \defeq S_k W^*SHW$ and $M_k \defeq S_kW^*SW$. From \eqref{eqn::similar_SH_matrix}, $H_k$ is pseudo-hermitian, and therefore characterized by the same structural properties as introduced in Section \ref{sec::properties_of_quasihermitian}. By multiplying from the left both sides of \eqref{eqn::generalized_eigenvalue_problem}  by $S_k$, one gets the equivalent pseudo-hermitian generalized eigenvalue problem
    \begin{equation}
       H_k \Phi = M_k \Phi\tilde{\Lambda}_+.
    \end{equation}
    Due to its pseudo-hermitian nature, $H_k$ satisfies $H_k = -K_k\overline{H}_kK_k$. On the other hand, $M_k$ has a particular block structure that enables a similar relation of the form $M_k = K_k\overline{M}_kK_k$. Since we know from Theorem \ref{theorem::spectral_equivalent_hermitian_eigenvalue problem} that $G$ has a real spectrum, we subsequently have 
    \begin{equation}
        -K_k\overline{H}_kK_k \Phi = K_k\overline{M}_kK_k \Phi\tilde{\Lambda}_+ \quad \Leftrightarrow \quad H_k K_k \overline{\Phi} = M_k K_k\overline{\Phi}(-\tilde{\Lambda}_+).
    \end{equation}
    As a consequence, the spectrum of the oblique Rayleigh-Quotient $G$ is also positive-negative symmetric such that $\Lambda_- = -\Lambda_+$, with $Z_+ = R\Phi$ and $Z_- = RK_k\overline{\Phi}$ the associated sets of eigenvectors, proving the first statement of the theorem. Regarding back-transformed eigenvectors, we simply observe that
    \begin{equation*}
    \begin{aligned}
        QZ_- = QRK_k\overline{\Phi} &= [W_+,W_-]K_k\overline{\Phi}\\
        &= [W_-,W_+]\overline{\Phi} = [K\overline{W}_+,K\overline{W}_-]\overline{\Phi}= K\overline{[W_+,W_-]\Phi} = K\overline{QZ_+},
    \end{aligned}
    \end{equation*}
    which is consistent with the second statement. This conclude the proof.
\end{proof}

\subsection{Back-transformation and computation of the residuals} We conclude this section by describing the last step of the iterative process: computing the residuals for the approximate eigenpairs. Specifically, the residual norms of the positive and negative approximate eigenpairs are strictly equal -- a property that ensures the symmetric spectral treatment of ChASE throughout iterations. This equality follows directly from Theorem \ref{eqn::positive-negative-symmetry_oblique_RR_eq}, which establishes that the eigenvalues of the Rayleigh-Quotient appear in positive-symmetric pairs, and that the corresponding back-transformed eigenvectors also relate via the characteristic $K$-conjugate relation \eqref{eqn::positive-negative_right_eigenvectors}. As a result, both back-transformation and the residuals are computed on only half the search space.

\subsection{Subspace Initialization} The convergence and numerical robustness of the pseudo-hermitian ChASE algorithm are highly sensitive to the properties of the initial subspace. To ensure the stability of the proposed Rayleigh-Ritz procedure, the subspace must be constrained to the $S$-positive manifold, where $SH$ is HPD (\ref{introduction::quasi_hermitian_condition}).

Consider a random trial vector $\bv \in \mathbb{C}^{2m}$ partitioned into components $\bx, \by \in \mathbb{C}^m$ such that $\bv^T = [\bx^T, \by^T]$. For an eigenpair $(\lambda, \bv)$ of $H$ with $\lambda > 0$, pre-multiplying $H\bv = \lambda \bv$ by $\bv^* S$ yields $\bv^* SH \bv = \lambda (\bv^* S \bv) = \lambda \left( \|\bx\|_2^2 - \|\by\|_2^2 \right)$. Because $SH$ is HPD, the components of the random vector $v$ should satisfy $\|\bx\|_2 > \|\by\|_2$ for $\lambda > 0$, with the upper block $\bx$ dominating the lower block $\by$. Contrary to such a requirement, the typical random initialization based on a normal distribution leads to an expectation value of $E[\bv^*S\bv] \approx 0$, implying that the initial search subspace is statistically likely to contain ``S-neutral'' directions. This state is numerically catastrophic for the pseudo-hermitian ChASE as it can cause failures in the Rayleigh-Ritz procedure. 

To resolve this issue. we propose to bound the coupling ratio $\gamma := \|\by\|_2 / \|\bx\|_2$. Writing $H\bv = \lambda \bv$ in terms of its blocks
\begin{equation}
    (A-\lambda I)\bx = - B\by  \quad \text{and} \quad (\bar{A}+\lambda I)\by = - \bar{B}\bx,
\end{equation}
and bounding its norm appropriately enables us to characterize the coupling ratio $\gamma$ via the following sandwich bounds 
\begin{equation}
   \frac{\sigma_{\min}(A - \lambda I)}{\|B\|_2} \leq \gamma \leq \frac{\|B\|_2}{\sigma_{\min}(\bar{A} + \lambda I)}.
\end{equation}
By further approximating the denominator using the smallest positive eigenvalue $\lambda_1$ of $H$, we obtain a practical global upper bound for $\gamma$, such that $\gamma \leq \frac{\|B\|_2}{2 \lambda_1}$. While we could compute these bounds on the fly\footnote{The 2-norm of $B$ could be substituted with the cheaper Frobenious norm}, we chose the simpler empirical value $\gamma =10^{-3}$ consistent with the physical properties of the pseudo-hermitian Hamiltonians of all the systems we have tested. 
By this setting,
we ensure $\|\bx\|_2^2 \gg \|\by\|_2^2$, which pushes the initial $E[\bv^*S\bv]$ deep into the $S$-positive manifold. We emphasize that the stability bound $\gamma \le ||B||_2/ (2 \lambda_1$) is effectively invariant with respect to the system size $m$, implying that our heuristic choice of $\gamma$ remains valid across diverse materials and large-scale matrices; it is sufficiently small to satisfy the stability guardrail of the BSE Hamiltonian, yet large enough to break the statistical symmetry of random noise.

\NEW{The upgrade of ChASE for pseudo-hermitian Hamiltonians is summarized in Algorithm \ref{algortihm::pseudo-hermitian_chase}. In particular, it shows that in addition to filtering on $W_+$ only, the back-transformation for the right eigenvectors plus the computation of the residuals can be performed on the positive ones only, due to the spectral symmetry enabled by our oblique variant of Rayleigh-Ritz.}
\NEW{\begin{algorithm}[h!t]
  \caption{New workflow of ChASE for pseudo-hermitian Hamiltonians}
  \begin{algorithmic}[1]
  \Require pseudo-hermitian Hamiltonian $H$ of size $n \times n$, $\nev$, and $\nex$
  \Ensure $\left(\Lambda_{\nev},V_{\nev}\right)$ with
  $\Lambda_{\nev} = \diag(\lambda_1, \ldots,\lambda_{\nev})$ and $V_{\nev} = \left[
    \bv_1, \ldots, \bv_{\nev}\right]$
  \State $W_+\gets {\rm randn}(n,\nevex)$, $W_+[m+1:2m, :] \gets \gamma \cdot W_+[m+1:2m, :]$
  \State Estimate $\lambda_1$, $\lambda_{\nevex}$ and $\lambda_m$ with pseudo-hermitian Lanczos \cite{gruning_implementation_2011} \label{lst:line:lanczos} 
  \While{$\locked < \nev$}
  \State Filter one half of the vectors, $W_+ \gets p (H^2) W_+$ \label{lst:line:cheby}
  \State Recover the other half, $W_- \gets K\overline{W}_+$ \label{lst:line:cheby} 
  \State Orthonormalize, $Q \gets \big[SY_+,SY_-,W\big] R^{-1}$ \label{lst:line:VR-1} 
  \State Select non-locked space only, $Q \gets Q_{:,2\locked:2\nevex}$ \label{lst:line:VR-2} 
  \State Compute the Rayleigh-Quotient, $G \gets Q_{\Lcal}^*HQ$ \label{lst:line:rrstarts_pseudo} 
  \State Solve the reduced problem with \texttt{HEEVD}, $G Z = Z \tilde{\Lambda}$\label{lst:line:rrreduced_pseudo} 
  \State Back-transform only half, $W_+ \gets Q Z_+$ \label{lst:line:rrends}
  \State Compute half of the residuals, Res$\big(\Tilde{\Lambda}_+, W_+\big)$ \label{lst:line:resids}
  \State Lock the converged positive eigenpairs, $(\Lambda_+, V_+, Y_+) \gets \text{Locking}\big(\Tilde{\Lambda}_+, W_+\big)$
  \State Recover the converged negative eigenvectors, $Y_- \gets K\overline{Y}_+$ \label{lst:line:cheby} 
    \State $\locked \gets \text{size}(V_+)$
  \EndWhile
  \State $\Lambda_{\nev}, V_{\nev} \gets \Lambda_+, V_+ $ 
\end{algorithmic}
\label{algortihm::pseudo-hermitian_chase}
\end{algorithm}}

\NEW{Now that the upgrade of ChASE to tackle the smallest positive eigenpairs of pseudo-hermitian Hamiltonians have been introduced, the next section \ref{sec:rayleigh_ritz_variants_convergence} shows that our oblique variant for pseudo-hermitian Hamiltonians produces a tighter control over perturbation sensitivity and creates a pathway for recovering quadratic convergence.}

\OLD{Depending on how the reduced problem is formulated, two main variants are introduced in this paper for pseudo-hermitian problems: 1) \textit{Rayleigh-Quotient with hermitian spectral equivalence}, leads to solve a reduced hermitian eigenproblem. This allows the use of stable hermitian eigensolvers (e.g., \texttt{HEEVD}), and preserves many of the favorable spectral properties of $H$. In particular, this variant produces a tighter control over perturbation sensitivity and creates a pathway for recovering quadratic convergence under stable conditions. While very unlikely in practice, corner cases of failure may happen with this variant of Rayleigh-Ritz. This motivates the implementation of the a more general variant based on a non-hermitian Rayleigh-Quotient. 2) \textit{Rayleigh-Quotient with no hermitian spectral equivalence}, leads to solve a general non-symmetric eigensolver (e.g., \texttt{GEEV}), that is also made available in ChASE for the eventual cases where $SH$ is indefinite. While being naturally more expensive than solving an hermitian eigenvalue problem, this variant can be used as a back-up in case of failure of the hermitian Rayleigh-Quotient.}
\OLD{Section \ref{sec:rayleigh_ritz_variants_convergence} develops rigorous error bounds and convergence criteria for the hermitian Rayleigh-Quotient. We analyze how the convergence order — linear versus quadratic — depends on the interplay between the oblique trial subspaces, the conditioning of the dual basis, and key quantities such as $\Tilde{\delta}$, $\sigma_{\bu}$, and $\sigma_{\bv}$.}

\section{Quadratic Convergence of the Oblique Rayleigh-Ritz}\label{sec:rayleigh_ritz_variants_convergence}
\NEW{Our structure preserving Rayleigh-Ritz variant with hermitian spectral equivalence rests} on the oblique principle. \rw{Therefore, its} performance depends critically on how the dual basis $Q_{\Lcal}$ interacts with the search space $Q$. \NEW{In this section, we recall general considerations on oblique Rayleigh-Ritz, and demonstrates the quadratic convergence of the Ritz values enabled by our particular choice of $Q_{\Lcal}$ in \eqref{eqn::left_basis_hermitian_rr}}.
\subsection{General considerations on Oblique Rayleigh-Ritz}
Let $(\Tilde{\lambda},\Tilde{\bu},\Tilde{\bv})$ denote an approximate eigen-triplet associated with the exact eigencomponents $(\lambda,\bu,\bv)$ corresponding to the eigenvalue, the left-eigenvector and right-eigenvector, respectively. In practice, left eigenvectors are not explicitly computed during the iteration, since they can be recovered \textit{a posteriori} from the right eigenvectors via the relation \eqref{theorem::U_and_V_relation}. That said, the left eigenvectors, like the right ones, play a crucial role in the analysis of convergence. 

We assume that all the eigenvectors along with their approximations are normalized, such that
\begin{equation}
    ||\bu||_2 = ||\bv||_2 = ||\Tilde{\bu}||_2  = ||\Tilde{\bv}||_2 = 1, \quad \delta \defeq \bu^*\bv, \quad \Tilde{\delta} \defeq \Tilde{\bu}^*\Tilde{\bv}.
    \label{eqn::vector_normalized}
\end{equation}
Additionally, both $\Tilde{\bu}$ and $\Tilde{\bv}$ can respectively be decomposed as
\begin{equation}
    \Tilde{\bu} = \gamma_{\bu} \bu + \sigma_{\bu} \be_{\bu}, \quad \Tilde{\bv} = \gamma_{\bv} \bv + \sigma_{\bv} \be_{\bv} \quad \text{with} \quad \be_{\bu} \perp \bu \quad \text{and} \quad \be_{\bv} \perp \bv.
    \label{eqn::vector_decomposed}
\end{equation}
In oblique variant, approximate eigenpairs satisfy the \textit{Petrov–Galerkin} condition \begin{equation}\label{eqn::rayleigh-ritz_as_a_projection}
    \Tilde{\bu}^*\left(H\Tilde{\bv} - \Tilde{\lambda}\Tilde{\bv}\right) = 0, \quad \text{with} \quad \Tilde{\bv} \in \text{range}(Q) \quad \text{and} \quad \forall\Tilde{\bu} \in \text{range}(Q_{\Lcal}),
\end{equation}
which explicitly enforces that the residuals are orthogonal to the dual subspace (see e.g., \cite{saad_numerical_2011}). \NEW{The Rayleigh-Quotient is constructed via $G\defeq Q_{\Lcal}^*HQ$.} Although we do not compute the left eigenvectors in practice, they can in principle be obtained by back-transformations \NEW{from} the eigendecomposition of $G$
\begin{equation}\label{eqn::solving_the_rayleigh_quotient}
    \Tilde{\bv} = Q\bz \quad \text{with} \quad G\bz = \Tilde{\lambda}\bz, \quad \text{and} \quad \Tilde{\bu} = Q_{\Lcal} \bw \quad \text{with} \quad \bw^*G = \Tilde{\lambda}\bw^*.
\end{equation}
Since $Q_{\Lcal}^*Q = I_k$ and $\|Q\bz\|_2 = 1$ because $Q$ is unitary (i.e., its columns form an orthonormal basis of the search subspace), we have
\begin{equation}\label{eqn::deriving_delta_tilde}
    \Tilde{\delta} \defeq \Tilde{\bu}^*\Tilde{\bv}=\frac{\bw^*Q_{\Lcal}^*Q\bz}{\|Q_{\Lcal}\bw\|_2\|Q\bz\|_2}=\frac{\bw^*\bz}{\|Q_{\Lcal}\bw\|_2}.
\end{equation}
To assess convergence, we first recall the role of the dual basis and derive bounds for the Ritz values in terms of $\Tilde{\delta}$ defined in \eqref{eqn::vector_normalized} and the error coefficients $\sigma_{\bu}, \sigma_{\bv}$ from \eqref{eqn::vector_decomposed}. These bounds will serve as the foundation for analyzing conditions under which the method achieves quadratic convergence.

\begin{theorem}\label{theorem::convergence_ritz_values_oblique_variant}
    The convergence of a Ritz value $\Tilde{\lambda}$ in the oblique variant satisfies
    \begin{equation}\label{eqn::convergence_ritz_values_oblique_variant}
        \big| \lambda - \Tilde{\lambda} \big| \leq \big|\Tilde{\delta}\big|^{-1}\, \big|\bar{\sigma}_{\bu}\sigma_{\bv}\big| \,\big\|H-\lambda I\big\|_2.
    \end{equation}
\end{theorem}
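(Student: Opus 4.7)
The plan is to express the Ritz value error $\lambda - \Tilde{\lambda}$ as a ratio involving the residual $\Tilde{\bu}^*(H-\lambda I)\Tilde{\bv}$ and the inner product $\Tilde{\delta}$, then exploit both the Petrov--Galerkin condition and the decompositions of $\Tilde{\bu},\Tilde{\bv}$ to identify the leading-order surviving term.

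First, I would invoke the Petrov--Galerkin condition \eqref{eqn::rayleigh-ritz_as_a_projection} to write
\begin{equation*}
    \Tilde{\lambda} \;=\; \frac{\Tilde{\bu}^* H \Tilde{\bv}}{\Tilde{\bu}^*\Tilde{\bv}} \;=\; \frac{\Tilde{\bu}^* H \Tilde{\bv}}{\Tilde{\delta}},
\end{equation*}
from which
\begin{equation*}
    \lambda - \Tilde{\lambda} \;=\; \frac{\lambda\Tilde{\delta} - \Tilde{\bu}^* H \Tilde{\bv}}{\Tilde{\delta}} \;=\; -\frac{\Tilde{\bu}^*(H-\lambda I)\Tilde{\bv}}{\Tilde{\delta}}.
\end{equation*}

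Next, I would substitute the decompositions from \eqref{eqn::vector_decomposed} into the numerator. Expanding produces four cross terms. Using the eigenvalue relations \eqref{eqn::general_eigenvalues_relation}, we have $(H - \lambda I)\bv = 0$ and $\bu^*(H - \lambda I) = 0$, which immediately kills the three terms that still contain $\bu$ or $\bv$. Only the error-error term survives:
\begin{equation*}
    \Tilde{\bu}^*(H - \lambda I)\Tilde{\bv} \;=\; \bar{\sigma}_{\bu}\sigma_{\bv}\,\be_{\bu}^*(H - \lambda I)\be_{\bv}.
\end{equation*}

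Finally, I would bound the remaining scalar by Cauchy--Schwarz and the submultiplicativity of the spectral norm, using the fact that $\be_{\bu}$ and $\be_{\bv}$ are unit vectors under the normalization convention in \eqref{eqn::vector_normalized}:
\begin{equation*}
    \bigl|\be_{\bu}^*(H-\lambda I)\be_{\bv}\bigr| \;\leq\; \|\be_{\bu}\|_2\,\|H-\lambda I\|_2\,\|\be_{\bv}\|_2 \;=\; \|H-\lambda I\|_2.
\end{equation*}
Combining the three displays and taking absolute values yields exactly \eqref{eqn::convergence_ritz_values_oblique_variant}. The whole argument is essentially a direct algebraic manipulation; the only subtlety is making sure to exploit \emph{both} the left and the right eigenvector relations from \eqref{eqn::general_eigenvalues_relation}, which is what enables three of the four cross terms to drop out and reveal the quadratic factor $\bar{\sigma}_{\bu}\sigma_{\bv}$. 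This step, while elementary, is the conceptual heart of the result: it is precisely the use of the dual basis approximating the left invariant subspace that allows the error to be second order in the subspace-error coefficients, whereas a purely orthogonal projection would only be able to exploit one of the two relations and produce at best a first-order bound.
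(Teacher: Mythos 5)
Your proof is correct and takes essentially the same route as the paper: express $\Tilde{\lambda}$ via the Petrov--Galerkin relation, substitute the decompositions \eqref{eqn::vector_decomposed}, and exploit both the left and right eigenvector identities so that only the $\bar{\sigma}_{\bu}\sigma_{\bv}\,\be_{\bu}^*(H-\lambda I)\be_{\bv}$ term survives, then bound by the spectral norm. Your organization is marginally cleaner --- by forming $\Tilde{\bu}^*(H-\lambda I)\Tilde{\bv}$ directly, three cross terms annihilate at once, whereas the paper first expands $\Tilde{\bu}^*H\Tilde{\bv}$ and separately expands $\Tilde{\delta}$ before cancelling --- but the underlying computation and key ideas are identical.
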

\begin{proof}
    Using \eqref{eqn::solving_the_rayleigh_quotient} and \eqref{eqn::deriving_delta_tilde} we can write
    \begin{equation}\label{eqn::why_lambda_tilde}
        \Tilde{\bu}^*H\Tilde{\bv} = \frac{\bw^*Q_{\Lcal}HQ\bz}{\|Q_{\Lcal}\bw\|_2} = \frac{\bw^*G\bz}{\|Q_{\Lcal}\bw\|_2} = \Tilde{\lambda}\frac{\bw^*\bz}{\|Q_{\Lcal}\bw\|_2} = \Tilde{\lambda}\Tilde{\delta},
    \end{equation}
    which allows us to express the approximate eigenvalue $\Tilde{\lambda} = \Tilde{\delta}^{-1}\Tilde{\bu}H\Tilde{\bv}$.
    Plugging the latter expression in the linear decompositions of \eqref{eqn::vector_decomposed} gives
    \begin{equation}\label{eqn::ritz_values_convergence_heevd_part1}
    \begin{split}
        \big| \lambda - \Tilde{\lambda} \big| & = \big| \lambda - \Tilde{\delta}^{-1}\,\Tilde{\bu}^*H\Tilde{\bv} \big|\\
        & = \big| \lambda - \Tilde{\delta}^{-1}\big[ \lambda  \big(\bar{\gamma}_{\bu}\gamma_{\bv}\bu^*\bv + \bar{\gamma}_{\bu}\sigma_{\bv}\bu^*\be_{\bv} + \bar{\sigma}_{\bu}\gamma_{\bv}\be_{\bu}^*\bv \big) + \bar{\sigma}_{\bu}\sigma_{\bv}\be_{\bu}^*H\be_{\bv}\big]\big|.
    \end{split}
    \end{equation}
    Expanding the scalar product $\Tilde{\delta} \defeq \Tilde{\bu}^*\Tilde{\bv}$ with \eqref{eqn::vector_decomposed} leads to 
    \begin{equation} \label{eqn::development_on_tilde}
        \Tilde{\delta} - \bar{\sigma}_{\bu}\sigma_{\bv}\be_{\bu}^*\be_{\bv} = \bar{\gamma}_{\bu}\gamma_{\bv}\bu^*\bv + \bar{\gamma}_{\bu}\sigma_{\bv}\bu^*\be_{\bv} + \bar{\sigma}_{\bu}\gamma_{\bv}\be_{\bu}^*\bv.
    \end{equation}
    Plugging \eqref{eqn::development_on_tilde} back into \eqref{eqn::ritz_values_convergence_heevd_part1} results in
    \begin{equation}\label{eqn::ritz_values_convergence_heevd}
    \begin{split}
        \big| \lambda - \Tilde{\lambda} \big| & = \big| \lambda - \Tilde{\delta}^{-1}\big[\lambda\big(\Tilde{\delta} - \bar{\sigma}_{\bu}\sigma_{\bv}\be_{\bu}^*\be_{\bv} ) - \bar{\sigma}_{\bu}\sigma_{\bv}\be_{\bu}^*H\be_{\bv}\big]\big|\\    
        & = \big|\Tilde{\delta}\big|^{-1}\cdot\big|\bar{\sigma}_{\bu}\sigma_{\bv}\be_{\bu}^*\big(H - \lambda I\big)\be_{\bv}\big|,
    \end{split}
    \end{equation}
    which provides an upper bound on the error and proves the statement of the theorem
    \begin{equation}
        \big| \lambda - \Tilde{\lambda} \big| \leq \big|\Tilde{\delta}\big|^{-1} \cdot \big|\bar{\sigma}_{\bu}\sigma_{\bv}\big| \cdot\big\|H-\lambda I\big\|_2.
    \end{equation}
    \NEW{A similar proof can also be found in \cite[Theorem~3.1]{hochstenbach_two-sided_2003}}.
\end{proof}

\begin{remark}[hermitian case]
If $H$ is hermitian, then left and right eigenvectors coincide: $\mathbf{u} = \mathbf{v}$. Consequently, $\tilde{\delta} = 1$ and $|\bar{\sigma}_{\mathbf{u}} \, \sigma_{\mathbf{v}}| = |\sigma_{\mathbf{v}}|^2$, and the bound reduces to
\[
|\lambda - \tilde{\lambda}| \leq |\sigma_{\mathbf{v}}|^2 \, \|H - \lambda I\|_2,
\]
recovering the classical quadratic convergence of the hermitian Rayleigh--Ritz of \eqref{eqn::hermitian_rayleigh-ritz_quadratic}.  

\end{remark}

\begin{remark}[non-hermitian case]
For non-hermitian or pseudo-hermitian $H$, left and right eigenvectors generally differ. Then $|\tilde{\delta}| < 1$ and $|\sigma_{\mathbf{u}}|$ may differ from $|\sigma_{\mathbf{v}}|$. Two key factors control convergence:
\begin{enumerate}
    \item \emph{Subspace projection errors:} The quantities $\sigma_{\mathbf{u}}$ and $\sigma_{\mathbf{v}}$ measure the misalignment of the search subspaces with the left and right invariant subspaces. Quadratic convergence occurs only if $|\sigma_{\mathbf{u}}|$ and $|\sigma_{\mathbf{v}}|$ are of the same order of magnitude.
    \item \emph{Dual subspace alignment:} The factor $|\tilde{\delta}|^{-1}$ amplifies the error if the approximate left and right subspaces are poorly aligned. Bounding $|\tilde{\delta}|^{-1}$ by a constant independent of $\sigma_{\mathbf{u}}$, $\sigma_{\mathbf{v}}$, and $\tilde{\lambda}$ improves convergence predictability. 
\end{enumerate}
\end{remark}
Unlike the hermitian case, the error in non-hermitian Rayleigh-Ritz depends both on how well the right eigenvectors are captured and how accurately the dual (left) subspace is approximated. The oblique procedure mitigates cross-term contamination by constructing a dual basis, but convergence is no longer automatically quadratic. Proper alignment of both subspace and control of $|\tilde{\delta}|$ are essential for fast convergence. With regards to the order of magnitude of $\sigma_{\bv}$ and $\sigma_{\bu}$, we note that
\begin{equation} \label{eqn::lower_bounds_for_sigmas}
    \big|\sigma_{\bv}\big| \geq \big\|\big(I-\Pi(Q)\big)\bv\big\|_2 \qquad \text{and} \qquad \big|\sigma_{\bu}\big| \geq \big\|\big(I-\Pi(Q_{\Lcal})\big)\bu\big\|_2,
\end{equation}
where $\Pi(Q) \defeq Q(Q^*Q)^{-1}Q^*$ is the $\ell_2$-orthogonal projection onto $\text{range}(Q)$ (similarly for $\Pi(Q_{\Lcal})$); see, e.g., \cite[section~4.3.1]{saad_numerical_2011}. In other words, $\sigma_{\bv}$ (resp., $\sigma_{\bu}$) measures the component of the exact eigenvector $\bv$ (resp., $\bu$) that lies outside the subspace spanned by $Q$ (resp., $Q_{\Lcal}$). These inequalities highlight that convergence depends critically on the alignment of the subspace $Q$ and $Q_{\Lcal}$ with the true right and left invariant subspaces. Importantly, even if $Q$ approximates $\bv$ well, there is no guarantee that $Q_{\Lcal}$ approximates $\bu$ equally well. When the magnitudes of $|\sigma_{\bu}|$ and $|\sigma_{\bv}|$ differ significantly, the product $|\sigma_{\bu}\sigma_{\bv}|$ in the error bound becomes dominated by the larger term, leading to only linear rather than quadratic convergence. 
Ideally, \NEW{the dual basis $Q_{\Lcal}$ should ensure that }the components of $\bu$ and $\bv$ outside their respective subspaces are of similar magnitude
\begin{equation}\label{eqn::ideal_lower_bounds}
    \left\|\left(I-\Pi(Q_{\Lcal})\right)\bu\right\|_2 = \left\|\left(I-\Pi(Q)\right)\bv\right\|_2,
\end{equation}
ensuring that the convergence of the Ritz values is as fast as possible. \NEW{We will see that this ideal equality holds with our choice of $Q_{\Lcal}$ established in \eqref{eqn::left_basis_hermitian_rr}.}

As shown in the next theorem, an upper bound for $|\Tilde{\delta}|^{-1}$ depends on the norm of the dual basis $Q_{\Lcal}$ and the eigenvalue condition number of the Rayleigh-Quotient $G$. In what follows, we indicate the eigenvalue condition number (see \cite[chapter 2]{wilkinson_algebraic_eigenvalue_1988} \NEW{or \cite[chapter 3]{saad_numerical_2011}}) of the Rayleigh-Quotient $G$ with $\cond(\Tilde{\lambda},G)~\defeq~(|\bw^*\bz|)^{-1}$, where $\bw$ and $\bz$ are respectively a left and right eigenvector as defined in \eqref{eqn::solving_the_rayleigh_quotient}. Similar to the usual interpretation of the condition number of a matrix, the eigenvalue condition number relates the perturbation of an eigenvalue with the perturbation of the associated matrix: when $\cond(\Tilde{\lambda},G)$ is large, a small perturbation on $G$ can produce a large perturbation of $\Tilde{\lambda}$. This quantity gives an indication on the numerical stability of the eigensolver.
\begin{lemma} \label{theorem::bound_for_delta}
    Let $\cond(\Tilde{\lambda},G)$ be an eigenvalue condition number of the oblique Rayleigh-Quotient $G$. Then, the oblique overlap $\big|\Tilde{\delta}\big|^{-1}$ is bounded as follows
    \begin{equation}\label{eqn::bound_for_delta}
        \big|\Tilde{\delta}\big|^{-1} \leq\; \cond(\Tilde{\lambda},G)\cdot\big\|Q_{\Lcal}\big\|_2.
    \end{equation}
\end{lemma}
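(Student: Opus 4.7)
The plan is to unfold the definition of $\Tilde{\delta}$ already derived in \eqref{eqn::deriving_delta_tilde} and then apply a submultiplicativity argument on each factor, so that the dual-basis norm and the eigenvalue condition number of $G$ appear naturally. There is essentially no technical difficulty: the result is a one-line inequality once the right-hand side of \eqref{eqn::deriving_delta_tilde} is written with absolute values.

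First, I would recall that the reduced-problem eigenvectors $\bw$ and $\bz$ introduced in \eqref{eqn::solving_the_rayleigh_quotient} are assumed to be normalized, i.e.\ $\|\bw\|_2=\|\bz\|_2=1$, since the Ritz vectors $\Tilde{\bv}=Q\bw$ and $\Tilde{\bu}=Q_{\Lcal}\bz$ have unit norm in \eqref{eqn::vector_normalized} and $Q$ has orthonormal columns (so $\|Q\bw\|_2=\|\bw\|_2=1$). With this convention, the scalar $|\bz^*\bw|^{-1}$ coincides exactly with the eigenvalue condition number $\cond(\Tilde{\lambda},G)$ as defined immediately above the statement.

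Next I would take \eqref{eqn::deriving_delta_tilde}, invert, and take absolute values:
\begin{equation*}
  \big|\Tilde{\delta}\big|^{-1}
  \;=\; \frac{\|Q_{\Lcal}\bz\|_2}{\,|\bz^*\bw|\,}.
\end{equation*}
The numerator is bounded by the induced operator norm,
$\|Q_{\Lcal}\bz\|_2 \le \|Q_{\Lcal}\|_2 \cdot \|\bz\|_2 = \|Q_{\Lcal}\|_2$,
while the denominator is exactly $\cond(\Tilde{\lambda},G)^{-1}$. Combining these two observations yields the claimed bound
\begin{equation*}
  \big|\Tilde{\delta}\big|^{-1} \;\le\; \cond(\Tilde{\lambda},G)\cdot\|Q_{\Lcal}\|_2,
\end{equation*}
which completes the proof. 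The only genuinely delicate point is keeping track of the normalization of the reduced eigenvectors $\bz,\bw$, because an inconsistent choice would introduce an unwanted scaling factor on both sides; once this is fixed, the estimate is a pure submultiplicativity argument.
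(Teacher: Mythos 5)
Your argument matches the paper's proof almost line for line: invert \eqref{eqn::deriving_delta_tilde}, identify $|\bz^*\bw|^{-1}$ with $\cond(\Tilde{\lambda},G)$, and apply submultiplicativity to $\|Q_{\Lcal}\bz\|_2$. The one thing to tighten is the justification for $\|\bz\|_2=1$: since $Q_{\Lcal}$ is not an isometry, $\|\Tilde{\bu}\|_2=\|Q_{\Lcal}\bz\|_2=1$ does \emph{not} force $\|\bz\|_2=1$; rather, $\|\bz\|_2=\|\bw\|_2=1$ is the normalization convention built into $\cond(\Tilde{\lambda},G)\defeq(|\bz^*\bw|)^{-1}$, and the right-hand side of \eqref{eqn::deriving_delta_tilde} is scale-invariant in $\bz$, so this choice costs nothing. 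With that clarified, your chain of estimates is exactly the paper's.
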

\begin{proof}
The proof is a direct consequence of \eqref{eqn::deriving_delta_tilde}. Since the eigenvalue condition number is defined as the inverse of the angle between the associated left and right eigenvectors $\cond(\Tilde{\lambda},G)~\defeq~(|\bw^*\bz|)^{-1}$, we simply have 
\begin{equation}
    \big|\Tilde{\delta}\big| = \frac{|\bw^*\bz|}{\|Q_{\Lcal}\bw\|_2} = \frac{\cond(\Tilde{\lambda},G)^{-1}}{\|Q_{\Lcal}\bw\|_2} \quad \Rightarrow \quad \big|\Tilde{\delta}\big|^{-1} \leq  \cond(\Tilde{\lambda},G)\cdot\big\|Q_{\Lcal}\big\|_2.
\end{equation}
\end{proof}
In attempting to decrease the upper bound of $|\Tilde{\delta}|^{-1}$ in \eqref{eqn::bound_for_delta}, one could in principle set $Q_{\Lcal} \defeq Q$, because $Q^*Q = I_k$. However, this choice does not control $\cond(\tilde{\lambda},G)$: the reduced problem may still be ill-conditioned, and $\cond(\tilde{\lambda},G)$ can dominate the bound. Thus $Q_{\Lcal}=Q$ is not a panacea, and we have
\begin{equation*}
\begin{aligned}
    \left\|\left(I-\Pi(Q)\right)\bu\right\|_2 &= \left\|\left(S-QQ^*S\right)\bv \right\|_2 \\
    &= \left\|\left(I-\Pi(Q)\right)\bv + 2M\bv\right\|_2 \quad \text{with} \quad M \defeq \begin{bmatrix}
        0 & Q_1Q_2^*\\
        0 & Q_2Q_2^* - I
    \end{bmatrix},
\end{aligned}
\end{equation*}
where $Q_1$ and $Q_2$ respectively stand for the upper and lower part of $Q = [Q_1^T,Q_2^T]^T$. This extra $2M\bv$ stems from the pseudo-hermitian sign-flip relation between left and right eigenvectors and can be non-negligible, so even a small right-project error does not guarantee a small left-projection error. This explains why $|\sigma_{\bv}|$ and $|\sigma_{\bu}|$ can differ substantially and motivates designing $Q_{\Lcal}$ so that it directly approximates the left invariant subspace rather than relying on $Q_{\Lcal}=Q$.

\subsection{Rayleigh-Quotient with hermitian spectral equivalence}\label{subsec:rayleigh-quotient-hermitian-equivalence} 
\NEW{We now show that the two necessary conditions for quadratic convergence are satisfied by our oblique variant of Rayleigh-Ritz. Specifically, the $\ell_2$-projection operators $\Pi(Q_{\Lcal})$ and $\Pi(Q)$ yield the same lower bounds in \eqref{eqn::lower_bounds_for_sigmas} when applied respectively to left and right eigenvectors, so that the ideal equality \eqref{eqn::ideal_lower_bounds} holds. Moreover, we prove that $Q_{\Lcal}$ directly influences a constant bound for $|\Tilde{\delta}|^{-1}$, enabling the non-hermitian Rayleigh–Ritz procedure to achieve the same quadratic convergence rate as in the hermitian case. First,} the following theorem establishes that the ideal relation in \eqref{eqn::ideal_lower_bounds} holds in with the setting above.

\begin{theorem}\label{theorem::ideal_norm_equality_reached}
    Let $Q_{\Lcal}$ be defined as in \eqref{eqn::Q_LQ_L_definition_general_form}, namely $Q_{\Lcal} = SQ(Q^*SQ)^{-1}$. Then, the $\ell_2$-orthogonal projection operators $\Pi(Q)$ and $\Pi(Q_{\Lcal})$ satisfy
    \begin{equation}\label{eqn::ideal_norm_equality_reached}
        \left\|\left(I-\Pi(Q_{\Lcal})\right)\bu\right\|_2 = \left\|\left(I-\Pi(Q)\right)\bv\right\|_2.
    \end{equation}
\end{theorem}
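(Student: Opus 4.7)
The plan is to reduce the theorem to the identity
\[
\Pi(Q_{\Lcal}) \;=\; S\,\Pi(Q)\,S,
\]
which, combined with $\bu = S\bv$ (Theorem \ref{theorem::SH_HPD}) and the isometry of $S$, yields \eqref{eqn::ideal_norm_equality_reached} immediately. This is the crux: once the projector identity is in place, the rest is routine.

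First I would collect the elementary facts that drive the calculation. Since $S$ is block-diagonal with entries $\pm 1$, it satisfies $S = S^*$ and $S^2 = I$; in particular $S$ is unitary. Since $Q$ has orthonormal columns, $\Pi(Q) = QQ^*$. Finally, $Q^*SQ$ is hermitian, hence its inverse is hermitian as well. These facts will let me simplify every inverse adjoint that appears.

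Next I would compute $\Pi(Q_{\Lcal}) = Q_{\Lcal}(Q_{\Lcal}^*Q_{\Lcal})^{-1}Q_{\Lcal}^*$ directly from \eqref{eqn::left_basis_hermitian_rr}. The Gram matrix simplifies to
\[
Q_{\Lcal}^*Q_{\Lcal} \;=\; (Q^*SQ)^{-1}\,Q^*S\,S\,Q\,(Q^*SQ)^{-1} \;=\; (Q^*SQ)^{-2},
\]
using $S^2 = I$, $Q^*Q = I_k$, and the hermiticity of $(Q^*SQ)^{-1}$. Inserting this back into the definition of $\Pi(Q_{\Lcal})$, the factors $(Q^*SQ)^{-1}$ collapse against $(Q^*SQ)^2$ and I am left with
\[
\Pi(Q_{\Lcal}) \;=\; SQ\,Q^*S \;=\; S\,\Pi(Q)\,S.
\]

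With this in hand, the conclusion follows in two short steps. Using $S^2 = I$, I rewrite $I - \Pi(Q_{\Lcal}) = S\bigl(I - \Pi(Q)\bigr)S$. Then, applying this to $\bu$ and substituting $S\bu = \bv$, I obtain
\[
\bigl(I - \Pi(Q_{\Lcal})\bigr)\bu \;=\; S\,\bigl(I - \Pi(Q)\bigr)\bv.
\]
Taking the $\ell_2$-norm and using that $S$ is unitary gives the claimed equality. The only delicate point in the whole argument is making sure $Q^*SQ$ is genuinely invertible (which is the standing assumption used to define $Q_{\Lcal}$); no other subtlety arises.
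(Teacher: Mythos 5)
Your proposal is correct and follows essentially the same route as the paper's proof: compute $Q_{\Lcal}^*Q_{\Lcal} = (Q^*SQ)^{-2}$, collapse the projector to $\Pi(Q_{\Lcal}) = SQQ^*S$, then invoke $\bu = S\bv$ and the unitarity of $S$. The only cosmetic difference is that you phrase the intermediate step as the clean identity $\Pi(Q_{\Lcal}) = S\,\Pi(Q)\,S$, which is a slightly tidier way of saying the same thing.
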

\begin{proof}
    Developing the $\ell_2$-projection $\Pi(Q_{\Lcal})$ gives
    \begin{equation}
    \begin{split}
        \Pi(Q_{\Lcal}) & = Q_{\Lcal}\left(Q_{\Lcal}^*Q_{\Lcal}\right)^{-1}Q_{\Lcal}^*\\
        & = SQ\left(Q^*SQ\right)^{-1}\left(Q^*SQ\right)^{2}\left(Q^*SQ\right)^{-1} Q^*S = SQQ^*S.
    \end{split}
    \end{equation}
    Next, recall that $\bu = S\bv$. Since $Q$ has orthonormal columns, we obtain
    \begin{equation}
        \left(I-\Pi(Q_{\Lcal})\right)\bu = S\bv-SQQ^*\bv = S\left(I-\Pi(Q)\right)\bv.
    \end{equation}
    Finally, because $S$ is unitary, it preserves the $\ell_2$-norm. Hence 
    \begin{equation}
    \left\|\left(I-\Pi(Q_{\Lcal})\right)\bu\right\|_2~=~ \|S\left(I-\Pi(Q)\right)\bv\|_2~=~\|\left(I-\Pi(Q)\right)\bv\|_2,
    \end{equation}
    proving \eqref{eqn::ideal_norm_equality_reached}.
\end{proof}
From this result, we conclude that $Q_{\Lcal}$ and $Q$ approximate $\bu$ and $\bv$ with identical accuracy. Consequently,
$\Ocal(\sigma_{\bu})~\approx~\Ocal(\sigma_{\bv})$, which implies $|\bar{\sigma}_{\bu}\sigma_{\bv}| = \Ocal({\sigma_{\bv}^2})$.

In what follows, let $\lambda_{\min}(\cdot)$ and $\lambda_{\max}(\cdot)$ respectively be the minimum and maximum eigenvalue in magnitude of the designated matrix between parenthesis. To complete the proof of quadratic convergence of the Ritz values, it remains to establish an upper bound for $|\Tilde{\delta}|^{-1}$ that is independent of  both $\sigma_{\bv}$ and of the Ritz value $\Tilde{\lambda}$. This bound is derived in the next theorem, which refines Theorem \ref{theorem::bound_for_delta}.
\begin{theorem}
    The parameter $|\Tilde{\delta}|^{-1}$ is bounded as follows
    \begin{equation}
        \big|\Tilde{\delta}\big|^{-1} \leq \dfrac{\sqrt{\cond\big(H\big)}}{\left|\lambda_{\min}\big(Q^*SQ\big)\right|}. 
    \end{equation}
\end{theorem}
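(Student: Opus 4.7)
The plan is to start from Lemma~\ref{theorem::bound_for_delta}, which already gives
\[
|\tilde\delta|^{-1} \leq \operatorname{cond}(\tilde\lambda,G)\cdot\|Q_{\Lcal}\|_2,
\]
and then bound the two factors on the right-hand side separately using the specific choice $Q_{\Lcal}=SQ(Q^*SQ)^{-1}$ of this section.

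First, I would handle the easy factor $\|Q_{\Lcal}\|_2$. Since $S$ is unitary and $Q$ has orthonormal columns, $\|SQ\|_2=1$. The matrix $Q^*SQ$ is Hermitian (because $S^*=S$), so $\|(Q^*SQ)^{-1}\|_2 = 1/|\lambda_{\min}(Q^*SQ)|$. Submultiplicativity then yields
\[
\|Q_{\Lcal}\|_2 \leq \frac{1}{|\lambda_{\min}(Q^*SQ)|}.
\]

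The harder factor is $\operatorname{cond}(\tilde\lambda,G)=1/|\bz^*\bw|$. Here I would exploit the spectral equivalence established in \eqref{eqn::spectral_equivalent_hermitian_eigenvalue problem}. With $Q^*SHQ=LL^*$, the similarity $G = L^{-*}(L^*(Q^*SQ)^{-1}L)L^*$ shows that $G$ is similar to the Hermitian matrix $\widetilde G \defeq L^*(Q^*SQ)^{-1}L$. Consequently, if $\by$ is a unit eigenvector of $\widetilde G$ associated with $\tilde\lambda$, the right eigenvector of $G$ is $\bw \propto L^{-*}\by$ and—since $\widetilde G$ is Hermitian so its left eigenvector equals the right one—the left eigenvector of $G$ is $\bz\propto L\by$. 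After normalizing both to unit $\ell_2$-norm, a direct computation gives
\[
|\bz^*\bw| = \frac{1}{\|L\by\|_2\,\|L^{-*}\by\|_2},
\]
so that $\operatorname{cond}(\tilde\lambda,G) = \|L\by\|_2\,\|L^{-*}\by\|_2 \leq \|L\|_2\,\|L^{-1}\|_2 = \operatorname{cond}(L) = \sqrt{\operatorname{cond}(LL^*)} = \sqrt{\operatorname{cond}(Q^*SHQ)}$.

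It then remains to relate $\operatorname{cond}(Q^*SHQ)$ to $\operatorname{cond}(H)$. Since $SH$ is HPD and $Q$ has orthonormal columns, the Cauchy interlacing theorem applied to the Hermitian compression $Q^*SHQ$ gives
\[
\lambda_{\min}(SH) \leq \lambda_{\min}(Q^*SHQ) \quad \text{and} \quad \lambda_{\max}(Q^*SHQ) \leq \lambda_{\max}(SH),
\]
so $\operatorname{cond}(Q^*SHQ)\leq \operatorname{cond}(SH)$. Invoking Theorem~\ref{theorem::condition_number_same} to replace $\operatorname{cond}(SH)$ by $\operatorname{cond}(H)$ and combining with the bound on $\|Q_{\Lcal}\|_2$ yields the desired inequality. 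The main obstacle is the middle step: identifying the left and right eigenvectors of the non-Hermitian matrix $G$ through the Cholesky-based similarity, which is what makes the eigenvalue condition number collapse to $\operatorname{cond}(L)$ and ultimately removes any dependence on $\tilde\lambda$ or $\sigma_{\bv}$.
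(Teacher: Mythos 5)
Your proposal is correct and follows essentially the same route as the paper: start from Lemma~\ref{theorem::bound_for_delta}, bound $\|Q_{\Lcal}\|_2$ by $1/|\lambda_{\min}(Q^*SQ)|$, reduce $\cond(\tilde\lambda,G)$ to $\cond(L)=\sqrt{\cond(Q^*SHQ)}$ via the Cholesky-based Hermitian spectral equivalence (identifying $\bw\propto L^{-*}\by$ and $\bz\propto L\by$), and finish with Cauchy interlacing plus Theorem~\ref{theorem::condition_number_same}. The only cosmetic difference is that the paper computes $\|Q_{\Lcal}\|_2$ as an exact equality from $Q_{\Lcal}^*Q_{\Lcal}=(Q^*SQ)^{-2}$, while you reach the same numerical bound by submultiplicativity; both are valid.
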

\begin{proof}
    From Theorem \ref{theorem::bound_for_delta}, we need to control $\|Q_{\Lcal}\|_2$ and the eigenvalue condition number $\cond\big(\Tilde{\lambda},G\big)$. First, recall that
    \begin{equation}
        \big\|Q_{\Lcal}\big\|^2_2 = \sigma_{\max}\big(Q_{\Lcal}\big)^2 = \lambda_{\max}\big(Q_{\Lcal}^*Q_{\Lcal}\big).
    \end{equation}
    By the orthonormality of $Q$, one obtains
    \begin{equation}\label{eqn::Q_LQ_L_definition_general_form}
    \begin{split}
        Q_{\Lcal}^*Q_{\Lcal} & = (Q^*SQ)^{-1}Q^*SSQ(Q^*SQ)^{-1} = \left((Q^*SQ)^2\right)^{-1},
    \end{split}
    \end{equation}
    which reduces to the following $\ell_2$-norm
    \begin{equation}\label{eqn::norm_of_dual_basis_standard_setting}
        \big\|Q_{\Lcal}\big\|_2 = \left(\sqrt{\lambda_{\min}\big(Q^*SQ\big)^2}\right)^{-1} = \frac{1}{|\lambda_{\min}(Q^*SQ)|}.
    \end{equation}
    Second, the eigenvalue condition number of $G$ is given by the inverse of the angle between the normalized left and right eigenvectors. Recall that $Q^*SHQ = LL^*$ is the Cholesky factorization, then, for a normalized eigenvector $\by$ of the reduced hermitian matrix \eqref{eqn::spectral_equivalent_hermitian_eigenvalue problem}, we have
    \begin{equation}
        \cond\big(\Tilde{\lambda},G\big) = \frac{\big\|\bw\big\|_2\big\|\bz\big\|_2}{\big|\bw^*\bz\big|} = \frac{\big\|\by^*L^{-1}\big\|_2\big\|L\by\big\|_2}{\big|\by^*L^{-1}L\by\big|} \leq \sqrt{\cond\big(Q^*SHQ\big)}.
    \end{equation}
    Since $SH$ is hermitian (see \eqref{introduction::quasi_hermitian_condition}) and $Q$ is unitary, the eigenvalues of $Q^*SHQ$ interlace those of $SH$. Therefore
    \begin{equation}
        \cond\big(\Tilde{\lambda},G\big) \leq \sqrt{\cond\big(SH\big)} = \sqrt{\cond\big(H\big)},
    \end{equation}
    where the last equality follows from Theorem \ref{theorem::condition_number_same}. Combining both estimates gives
    \begin{equation}\label{eqn::upper_bound_for_eigenvalue_conditioning_standard_setting}
        \big|\Tilde{\delta}\big|^{-1} \leq \dfrac{\sqrt{\cond\big(H\big)}}{\left|\lambda_{\min}\big(Q^*SQ\big)\right|}. 
    \end{equation}
\end{proof}
The denominator $|\lambda_{\min}(Q^*SQ)|$ does not depend on $\sigma_{\bv}$ or on the target Ritz value $\Tilde{\lambda}$. While this quantity may approach zero in degenerate cases, in practice it is typically well-conditioned (see below for a further discussion).

Since $\Ocal(\sigma_{\bu}) = \Ocal(\sigma_{\bv})$ and $|\Tilde{\delta}|^{-1}$ is bounded independently of $\sigma_{\bv}$ and $\Tilde{\lambda}$, the Ritz values converge quadratically. This is stipulated by the next lemma.
\begin{lemma}\label{theorem::hermitian_equivalent_rayleigh-ritz_quadratic}
    The Ritz values converge quadratically, in the sense that
    \begin{equation} \label{eqn::hermitian_equivalent_rayleigh-ritz_quadratic}
        \big| \lambda - \Tilde{\lambda} \big| \leq \kappa \cdot \Ocal\big(\sigma^2_{\bv}\big) \quad \text{with} \quad \kappa \defeq \dfrac{\sqrt{\cond\big(H\big)}\cdot\big\|H-\lambda I\big\|_2}{\left|\lambda_{\min}\big(Q^*SQ\big)\right|}.
    \end{equation}
\end{lemma}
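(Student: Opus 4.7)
The plan is to assemble this lemma as a direct corollary of three results already proved in the paper, specialized to the choice $M = Q^{*}SQ$, so there is essentially no new mathematics to invent. Theorem~\ref{theorem::convergence_ritz_values_oblique_variant} already supplies the master inequality
\begin{equation*}
\big| \lambda - \Tilde{\lambda} \big| \leq \big|\Tilde{\delta}\big|^{-1} \cdot \big|\bar{\sigma}_{\bu}\sigma_{\bv}\big| \cdot \big\|H-\lambda I\big\|_2,
\end{equation*}
so the only task is to control the two factors $\big|\bar{\sigma}_{\bu}\sigma_{\bv}\big|$ and $\big|\Tilde{\delta}\big|^{-1}$ in this specific hermitian-equivalent setting and then collect the constants.

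For the product $\big|\bar{\sigma}_{\bu}\sigma_{\bv}\big|$, I would invoke Theorem~\ref{theorem::ideal_norm_equality_reached}, which establishes the identity $\|(I-\Pi(Q_{\Lcal}))\bu\|_2 = \|(I-\Pi(Q))\bv\|_2$ for the particular dual basis $Q_{\Lcal} = SQ(Q^{*}SQ)^{-1}$. Combined with the lower bounds \eqref{eqn::lower_bounds_for_sigmas} on $|\sigma_{\bu}|$ and $|\sigma_{\bv}|$, this forces the two defect coefficients to share the same order of magnitude, from which one concludes $\big|\bar{\sigma}_{\bu}\sigma_{\bv}\big| = \Ocal(\sigma_{\bv}^{2})$. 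For the dual-overlap factor $\big|\Tilde{\delta}\big|^{-1}$, I would simply plug in the bound \eqref{eqn::upper_bound_for_eigenvalue_conditioning_standard_setting}, namely $\big|\Tilde{\delta}\big|^{-1} \leq \sqrt{\cond(H)}/|\lambda_{\min}(Q^{*}SQ)|$. Multiplying the three contributions reproduces the constant $\kappa$ announced in the statement.

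The one subtle point, which I see as the main obstacle, is the implication ``equal projection residuals $\Rightarrow$ $\Ocal(\sigma_{\bu}) = \Ocal(\sigma_{\bv})$''. The quantities $\sigma_{\bu}$ and $\sigma_{\bv}$ are defined via the decompositions \eqref{eqn::vector_decomposed} of the \emph{normalized} eigenvectors, while Theorem~\ref{theorem::ideal_norm_equality_reached} gives equality of the $\ell_{2}$-projection residuals, which are only lower bounds for these scalars through \eqref{eqn::lower_bounds_for_sigmas}. To make the step rigorous I would argue that, under the natural normalization used here and in the regime where the filter has already pushed the search space close to the invariant subspace, the residuals and the coefficients agree up to constants of order one (equivalently, up to the cosine of the canonical angle between $\bv$ and $\mathrm{range}(Q)$, which tends to one). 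Once this is accepted, the remaining algebra is mechanical, the factor $\Ocal(\sigma_{\bv}^{2})$ emerges, and the stated quadratic bound in $\sigma_{\bv}$ with constant $\kappa$ follows immediately.
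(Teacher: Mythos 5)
Your proposal follows exactly the paper's own route: combine the master inequality of Theorem~\ref{theorem::convergence_ritz_values_oblique_variant}, the equal-projection-residual identity of Theorem~\ref{theorem::ideal_norm_equality_reached} (specialized to $M=Q^*SQ$) to force $|\bar{\sigma}_{\bu}\sigma_{\bv}|=\Ocal(\sigma_{\bv}^2)$, and the bound \eqref{eqn::upper_bound_for_eigenvalue_conditioning_standard_setting} on $|\Tilde{\delta}|^{-1}$, then collect constants into $\kappa$. The subtle step you flag---that equal projection residuals are only \emph{lower bounds} for $|\sigma_{\bu}|,|\sigma_{\bv}|$ and do not by themselves force $\Ocal(\sigma_{\bu})=\Ocal(\sigma_{\bv})$---is present and handled with exactly the same informal heuristic in the paper, so you have accurately reconstructed both the argument and its honest level of rigor.
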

The quadratic convergence of the Ritz-values is thus established. The bound depends primarily on the condition number of $H$ and the smallest eigenvalue in magnitude of $Q^*SQ$. Although this framework provides strong guarantees in terms of performance and convergence, difficulties arise when $\lambda_{\min}\big(Q^*SQ\big)$ is very close to zero. In that case the denominator may drastically amplify the bound. Such situations are rare in practice but cannot be ruled out \textit{a priori}. Because $S$ has the structure \eqref{introduction::pseudo_hermitian_condition} and $Q$ is unitary, Cauchy’s interlacing theorem ensures that the spectrum of $Q^*SQ$ is contained in the interval $[-1,1]$. Consequently, the denominator always acts to enlarge the upper bound $\kappa$. The next theorem sharpens this observation by providing an explicit interval in which the Ritz values must lie, showing how $\lambda_{\min}(Q^*SQ)$ effectively stretches it.
\begin{theorem}\label{theorem::field_of_values_ritz_hermitian_variant}
    Assume $Q^*SQ$ is non-singular. Then the Ritz values are all located within a closed interval, namely
    \begin{equation}\label{eqn::field_of_values_ritz_hermitian_variant}
        \Tilde{\lambda} \in \left[-\dfrac{\rho\big(SH\big)}{\big|\lambda_{\min}\big(Q^*SQ\big)\big|},\dfrac{\rho\big(SH\big)}{\big|\lambda_{\min}\big(Q^*SQ\big)\big|}\right].
    \end{equation}
\end{theorem}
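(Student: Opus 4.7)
The plan is to exploit the spectral equivalence \eqref{eqn::spectral_equivalent_hermitian_eigenvalue problem}, which identifies the Ritz values $\Tilde{\lambda}$ with the eigenvalues of the hermitian matrix $C \defeq L^*(Q^*SQ)^{-1}L$, where $Q^*SHQ = LL^*$ denotes the Cholesky factorization of the HPD compression. Since $C$ is hermitian, its spectrum is real and contained in $[-\|C\|_2, \|C\|_2]$, so the whole task reduces to establishing that $\|C\|_2 \leq \rho(SH)/|\lambda_{\min}(Q^*SQ)|$.

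First, I would apply submultiplicativity of the spectral norm to split the problem into two independent factors, $\|C\|_2 \leq \|L\|_2^2 \cdot \|(Q^*SQ)^{-1}\|_2$. The second factor is immediate: since $Q^*SQ$ is hermitian and non-singular by hypothesis, one has $\|(Q^*SQ)^{-1}\|_2 = 1/|\lambda_{\min}(Q^*SQ)|$. For the first factor, I would use $\|L\|_2^2 = \|LL^*\|_2 = \lambda_{\max}(Q^*SHQ)$, exploiting the fact that $Q^*SHQ$ is HPD so its largest singular value coincides with its largest eigenvalue.

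Second, I would invoke Cauchy's interlacing theorem for compressions by orthonormal columns: since $SH$ is HPD by \eqref{introduction::quasi_hermitian_condition} and $Q$ satisfies $Q^*Q = I_k$, the eigenvalues of $Q^*SHQ$ interlace those of $SH$, which yields $\lambda_{\max}(Q^*SHQ) \leq \lambda_{\max}(SH) = \rho(SH)$, where the last equality relies again on the positive-definiteness of $SH$. Chaining the two inequalities produces exactly the interval claimed in \eqref{eqn::field_of_values_ritz_hermitian_variant}.

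Each step is essentially a one-line application of a standard tool, so no genuine obstacle arises. The only point that deserves some care is that $Q$ is rectangular rather than square unitary, so Cauchy's interlacing must be applied in its formulation for compressions; this version covers the setting directly. It is also worth emphasizing in the write-up that the denominator $|\lambda_{\min}(Q^*SQ)|$ is precisely the factor by which the oblique projection can stretch the spectral window, recovering the purely hermitian bound $\tilde\lambda \in [-\rho(SH), \rho(SH)]$ in the limiting case $Q^*SQ = I_k$.
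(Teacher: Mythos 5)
Your proof is correct, but it takes a genuinely different route from the paper's. The paper treats $\Tilde{\lambda}$ as generalized eigenvalues of the hermitian pencil $\big(Q^*SHQ,\,Q^*SQ\big)$ and invokes the quotient-of-fields-of-values result from Horn and Johnson, combined with Cauchy interlacing on the numerator factor $Q^*SHQ$. You instead pass through the hermitian spectral equivalent $C \defeq L^*(Q^*SQ)^{-1}L$ of \eqref{eqn::spectral_equivalent_hermitian_eigenvalue problem} and bound its spectral norm by submultiplicativity, $\|C\|_2 \leq \|L\|_2^2\,\|(Q^*SQ)^{-1}\|_2 = \lambda_{\max}(Q^*SHQ)/|\lambda_{\min}(Q^*SQ)|$, then apply interlacing to the first factor. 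Both arguments share the interlacing step, but the crux differs. Your version is arguably more elementary and more transparently watertight: the quotient-of-fields-of-values route requires one to be careful that, for an indefinite $Q^*SQ$, the denominator $\bw^*(Q^*SQ)\bw$ evaluated at a pencil eigenvector $\bw$ is not only nonzero (which follows from positive definiteness of $Q^*SHQ$) but is bounded away from zero by $|\lambda_{\min}(Q^*SQ)|\,\|\bw\|_2^2$, a point the paper's proof leaves implicit. Your norm-based argument sidesteps this issue entirely because $\|(Q^*SQ)^{-1}\|_2 = 1/\sigma_{\min}(Q^*SQ) = 1/|\lambda_{\min}(Q^*SQ)|$ is a uniform bound independent of any particular eigenvector. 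A very minor stylistic point: it is worth stating explicitly that $\|L\|_2^2 = \lambda_{\max}(LL^*) = \lambda_{\max}(Q^*SHQ)$ uses that $L$ is square so $LL^*$ and $L^*L$ share a spectrum, and that the last equality $\lambda_{\max}(SH) = \rho(SH)$ holds precisely because $SH$ is HPD.
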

\begin{proof}
 Since both matrices forming the Rayleigh-Quotient are hermitian, their fields of values coincide with their spectral intervals. 
    First, since $Q^*SQ$ is assumed non-singular, its spectrum is symmetric with respect to zero, and we obtain
    \begin{equation*}
        \quad \Fcal(Q^*SQ) \subset \left[-\left|\lambda_{\max}(Q^*SQ)\right|,-\left|\lambda_{\min}(Q^*SQ)\right|\right]\cup\left[\left|\lambda_{\min}(Q^*SQ)\right|,\left|\lambda_{\max}(Q^*SQ)\right|\right]
    \end{equation*}
    Second, the eigenvalues of $Q^*SHQ$ interlace with those of $SH$, since $Q$ is unitary. Therefore, we have $\Fcal(Q^*SHQ) \subset \big[\lambda_{\min}(SH),\rho(SH)\big].$
    By the result of \cite[chapter~1]{horn_topics_1991}, the Ritz values are contained in the field of values $\Fcal(Q^*SHQ) / \Fcal(Q^*SQ)$, which leads directly to \eqref{eqn::field_of_values_ritz_hermitian_variant}.
\end{proof}
This result complements Theorem \ref{theorem::hermitian_equivalent_rayleigh-ritz_quadratic}:
while the bound $\kappa$ already reveals how a small eigenvalue of $Q^*SQ$ can deteriorate convergence, Theorem \ref{theorem::field_of_values_ritz_hermitian_variant} shows that it also enlarges the admissible interval in which the Ritz values may lie. Since $A$ is a principal submatrix of $SH$ and that Lemma \ref{lemma::interval_of_real_eigenvalues} provides the exact location interval of the eigenvalues, we subsequently have $\lambda \in [-\rho(A),\rho(A)]\subseteq  [-\rho(SH),\rho(SH)] / |\lambda_{\min}\big(Q^*SQ\big)|$. Any Ritz-values falling outside this interval can be regarded as ``spurious values''. The domain where such spurious values appear expands as $\lambda_{\min}(Q^*SQ)$ decreases. 
\OLD{
\subsection{Non-hermitian Rayleigh-Quotient as back-up} This framework will only be used in the case of failure of the default hermitian eigenproblem where $Q^*SQ$ is singular. Here, we want to ensure that the alternative $M$ matrix is always non-singular and well-conditioned. One straightforward and practical choice that is very easy to check is setting $M \defeq \text{diag}(Q^*SQ)$. In that case, the dual basis becomes
\begin{equation} \label{eqn::second_dual_basis_pseudo-hermitian_identity}
    Q_{\Lcal} \defeq \left[SQ - Q \left(Q^*SQ - \text{diag}(Q^*SQ)\right)\right]\text{diag}^{-1}(Q^*SQ) ,
\end{equation}
and the Rayleigh-Quotient $G$ is given by
\begin{equation}\label{eqn::back_up_rayleigh_quotient}
    G = \text{diag}^{-1}(Q^*SQ)\left[Q^*SHQ - \left(Q^*SQ - \text{diag}(Q^*SQ)\right)Q^*HQ\right].
\end{equation}
As expected, this matrix $G$ is not hermitian, and has no hermitian spectral equivalence due to the appearance of $Q^*HQ$ in \eqref{eqn::back_up_rayleigh_quotient}. Hence, the Ritz-values are all complex. Therefore, we keep only the real parts in practice. In addition, the ideal equality \eqref{eqn::ideal_norm_equality_reached} does not apply here. Therefore, the approximation error of the eigenvalues is unlikely to decrease quadratically with respect to $\sigma_{\bv}$, which motivates resorting to this implementation only as a back-up option in case of failure of the default setting $M \defeq Q^*SQ$. In the eventual case where a zero entries appear in $\text{diag}(Q^*SQ)$, one can always replace it without breaking the projection properties of Rayleigh-Ritz. 

Figure \ref{fig::residual_evolution_real_medium} shows the evolution of the residual errors $\|H\Tilde{\bv}_i - \Tilde{\lambda}_i\hat{\bv}_i\|_2$ with respect to the number of iterations for two real world matrices generated by the Yambo code \cite{marini_yambo_2009}. Green curves show the convergence with the \textit{hermitian spectrally equivalent Rayleigh-Quotient} (\texttt{HEEVD}) and blue curves depict the convergence when using the alternative \textit{non-hermitian Rayleigh-Quotient} (\texttt{GEEV}). The red lines represent the evolution of the residuals in the standard version of ChASE when applied to a spectrally equivalent hermitian matrix generated artificially. 

For the first hundred smallest eigenpairs (Figure \ref{fig::first_nev_medium}), the rates of convergence for the three different implementations appear very similar. However, the \texttt{GEEV}-based implementation stagnates around a slightly higher residual error than the two other variants. Focusing on the last eigenpairs ranging from index $900$ to index $1000$, the second subfigure \ref{fig::last_nev_medium}  reveals a clear difference between the standard version  of ChASE and the pseudo-hermitian ones. While the standard implementation starts stagnating below $10^{-13}$ after the $4^{\text{th}}$ iteration, the pseudo-hermitian implementations require one to two more iterations to reach the same residual error. In practice, the parameter $\mu_{\nevex}$ is updated at each iteration to track only the remaining portion of non-converged eigenpairs. These observations hold for the larger test case illustrated in Figure~\ref{fig::residual_evolution_real_medium}.

In spite of the complexity of its convergence properties, the experiments show that the \textit{non-hermitian Rayleigh-Ritz} approach can be a viable method for expanding ChASE to solve for certain pseudo-hermitian Hamiltonians. This is confirmed in the further convergence experiments illustrated in Figure \ref{fig::convergence_experiment}. The \texttt{HEEVD} variant demonstrates a faster convergence while offering a cheaper numerical complexity and smaller memory requirement. That said, \texttt{GEEV} remains a suitable choice in certain cases. Moreover, we recall that the latter approach does not rely on the positive-definiteness assumption of $SH$ as stated in \eqref{introduction::quasi_hermitian_condition}. Hence, this implementation may be a good candidate in expanding ChASE functionalities to the more general case when $SH$ is indefinite.

\begin{figure}[h!]
\centering
\hfill
\begin{minipage}[l]{0.49\textwidth}
\includegraphics[scale=0.65]{figures/january/residuals_evolution_n=64512_double_first.pdf}
\subcaption{First $100^{\text{th}}$ }
\label{fig::first_nev_medium}
\end{minipage}
\hfill
\begin{minipage}[c]{0.49\textwidth}
\includegraphics[scale=0.65]{figures/january/residuals_evolution_n=64512_double_last.pdf}
\subcaption{Last $100^{\text{th}}$}
\label{fig::last_nev_medium}
\end{minipage}
\captionsetup{justification=centering}
\vspace{-0.5cm}
\caption{Evolution of the first $100^{\text{th}}$ vs. last $100^{\text{th}}$ residuals out of $\nev = 645$ smallest eigenpairs for a Molybdenum Disulfide pseudo-hermitian Hamiltonian of size $n = 64512$ generated with Yambo}
\label{fig::residual_evolution_real_medium}
\end{figure}}
\section{Numerical \wurw{experiments}}
\label{sec::benchmarks}
\OLD{In this section, we test the proposed extension of ChASE.} 
\NEW{\wurw{This section presents numerical experiments evaluating the proposed extension of ChASE on a distributed-GPU system, including convergence behavior and strong scaling benchmarks. The parallel implementation adopts the same strategy introduced in \cite{wu_advancing_2023}. All experiments are performed in single precision (FP32) on the JUPITER supercomputer at the J\"ulich Supercomputing Centre in Germany. Each node comprises four NVIDIA Grace–Hopper (GH200) superchips, each integrating a 72-core Grace CPU with a Hopper GPU (96 GB HBM3). GPUs are interconnected via NVLink~4, and nodes are connected through InfiniBand NDR. The theoretical peak performance of a single GPU for FP32 is 67 TFLOP/s (with tensor cores). The code is compiled with GCC~14.3.0, CUDA~13, and OpenMPI~5.0.8, with NCCL for communication.}}

The numerical experiments are conducted using \OLD{four}\NEW{\wurw{six}} pseudo-hermitian Hamiltonians \OLD{encoded}\NEW{\wurw{constructed}} by the Bethe-Salpeter equation and generated by the Yambo \cite{marini_yambo_2009} code. \NEW{Three of the Hamiltonians, \textsc{Si-1k}, \textsc{Si-11k} and \textsc{Si-39k}}, corresponds to the discretizations of a Silicon material (Si) with \NEW{three} different supercells, and have sizes \NEW{$2{,}944$}, $23{,}552$ and $79{,}488$, respectively. Similarly, the remaining \NEW{three} Hamiltonians, \NEW{\textsc{MoS$_2$-4k}, \textsc{MoS$_2$-32k} and \textsc{MoS$_2$-52k}}, correspond to a Molybdenum Disulfide (MoS$_2$) system and have sizes \NEW{$9{,}416$}, $64{,}512$ and $104{,}832$, respectively\NEW{\footnote{\wurw{We note that the naming convention (e.g., \textsc{52k}) refers to the dimension of the resonant block; the full pseudo-hermitian Hamiltonian has twice this dimension due to its block structure}.}}.

\NEW{\wurw{As a baseline, ChASE is used as a black-box solver without incorporating problem-specific physical information. We consider $\nev \in \{1\%,2\%,3\%\}\times n$, and for TDA, which involves only the resonant block, this corresponds to $\nev \in \{2\%,4\%,6\%\}\times m$ 
The size of the search space extension $\nex$ is chosen proportional to $\nev$ and depends on the target residual tolerance. Specifically, we set $\nex = 2/3 \times \nev $ for a tolerance of $10^{-4}$, and $\nex=\nev$ for a tighter tolerance of $10^{-5}$.}}
\OLD{As a start, ChASE is used as a black-box solver, without incorporating problem-specific physical information. Accordingly, the size of the extra space is always set to $\nex \defeq \nev$, and $\nev \in \{1\%,2\%,3\%\}\times n$. For all experiments, the MPI process grid is configured to be as close to square as possible.}

\subsection{Convergence Experiments}
\NEW{Figures \ref{fig::iteration_tol4} and \ref{fig::iteration_tol5} show the number of iterations required by ChASE to solve the six pseudo-hermitian eigenproblems at tolerance $10^{-4}$ and $10^{-5}$, respectively. The maximal number of iterations is set to $25$, and the Chebyshev polynomial degrees dynamically ranges from $8$ to $20$ depending on the actual convergence rate at each iteration (See Section 4, \cite{winkelmann_chase_2019}).  In both figures, the plain bars correspond to the full pseudo-hermitian Hamiltonian $H$, while the dashed bars correspond to the TDA resonant block $A$, solved with the standard ChASE hermitian solver \cite{wu_advancing_2023}.}
\begin{figure}[h!]
\centering
\hspace{-20pt}
\includegraphics[scale=1]{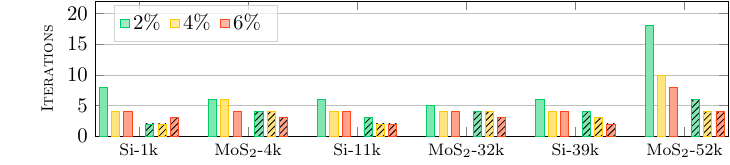}
\captionsetup{justification=centering}
\captionsetup{justification=centering}
\caption{Number of iterations with respect to the resonant size -- $\tol = 10^{-4}$ -- $\nex = 2/3 \times \nev $ -- Hamiltonian $H$ (plain bars) vs. Resonant $A$ (dashed bars)}
\label{fig::iteration_tol4}
\end{figure}
\begin{figure}[h!]
\centering
\hspace{-20pt}
\includegraphics[scale=1]{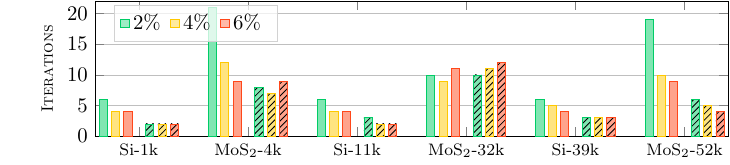}
\captionsetup{justification=centering}
\caption{Number of iterations with respect to the resonant size  -- $\tol = 10^{-5}$ -- $\nex = \nev $ -- Hamiltonian $H$ (plain bars) vs. Resonant $A$ (dashed bars)}
\label{fig::iteration_tol5}
\end{figure}

Overall, ChASE converges in fewer than $25$ iterations in all cases, fewer than 10 iterations for most of cases. The pseudo-hermitian variant requires slightly more iterations that the hermitian solver applied to the resonant TDA block, reflecting the effect of squaring the matrix in the filter step, which brings the convergence ratio of the target eigenvalues closer to one 
 (see \cite[Section 2]{napoli_estimating_2026}). In Fig.~\ref{fig::iteration_tol5}, the increased iteration count for \textsc{MoS$_2$} compared to \textsc{Si} is a direct consequence of higher spectral density near the smallest positive eigenvalue, where clustered eigenvalues reduce the filtering efficiency. 
 With the exception of \textsc{MoS$_2-32$k} for $\tol = 10^{-5}$, the number of iterations either stays constant or decreases with increasing $\nev$. This is consistent with the definition of ratio of convergence, where increasing $\nex$ may include spectral gaps that enhance the ratio of convergence for $|\rho|_{\lambda < \lambda_{\nev}}$ 
 , showing that enlarging the extra space is particularly important for tightly packed eigenvalues. 
 
 \wurw{In practice, the parameter $\nex$ can be tuned to improve convergence and overall performance. Automating the selection of $\nex$ based on spectral density
 is a promising direction for future work. In the present study, we adopted an universal choice of $\nex$ for all physical systems---and set $\mu_{\nevex}$ accordingly---to facilitate a consistent and transparent convergence comparison across all tests.}
 
 \subsection{Performance Experiments} \NEW{We evaluate the strong scaling of the extended ChASE solver on two pseudo-hermitian Hamiltonians in \NEW{FP32}\OLD{double precision} on \wurw{JUPITER, using up to 64 nodes (256 GPUs in total)}: a \textit{medium} test case with \textsc{Si-39k} matrix, and a \textit{large} test case with \textsc{MoS$_2$-52k} matrix. \wurw{The corresponding 2D MPI grid is configured as a square, which is the preferred layout for ChASE.} Each experiment is repeated five times, using the same parameters as for the convergence experiment of Figure \ref{fig::iteration_tol4} (i.e., $\tol = 10^{-4}$ and $\nex = 2/3\times\nev$).} In the figures, solid lines indicate the average over 5 repetitions, while shaded regions show the full range from the minimum to the maximum values. Black lines correspond to the ideal strong scaling. The color scheme for different $\nev$ values follows Figure \ref{fig::iteration_tol4} (i.e., $1\%$ in green, $2\%$ in orange, $3\%$ in red).
\begin{figure}[h!]
\centering
\hfill
\begin{minipage}[l]{0.48\textwidth}
\includegraphics[scale=0.325]{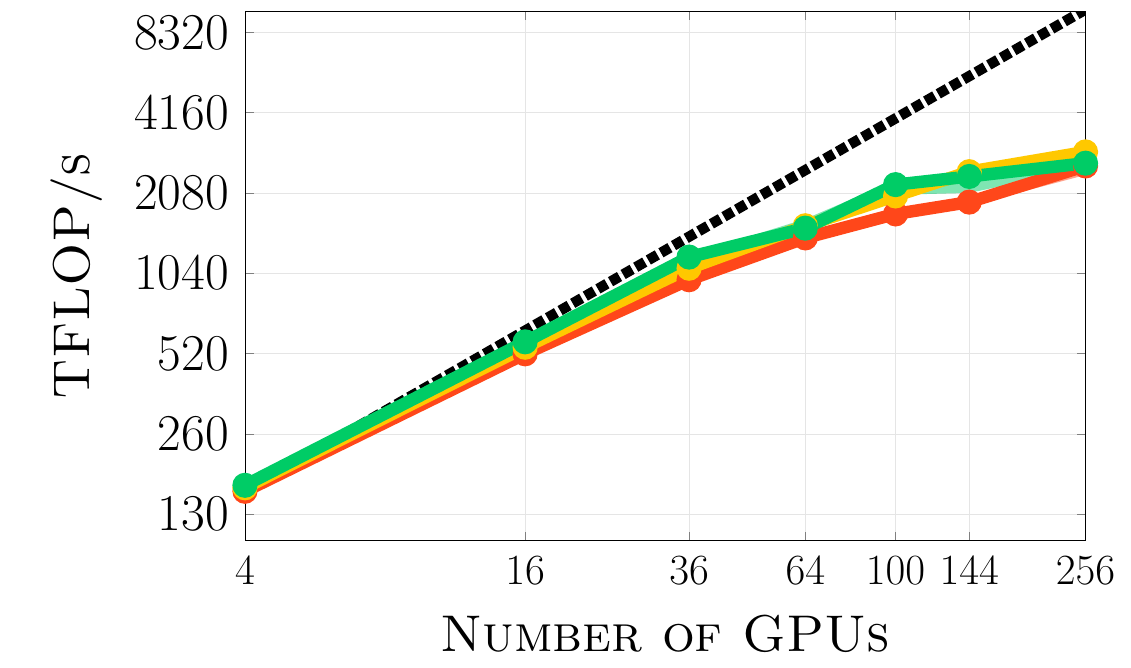}\captionsetup{justification=centering}
\end{minipage}
\hfill
\begin{minipage}[r]{0.48\textwidth}
\includegraphics[scale=0.325]{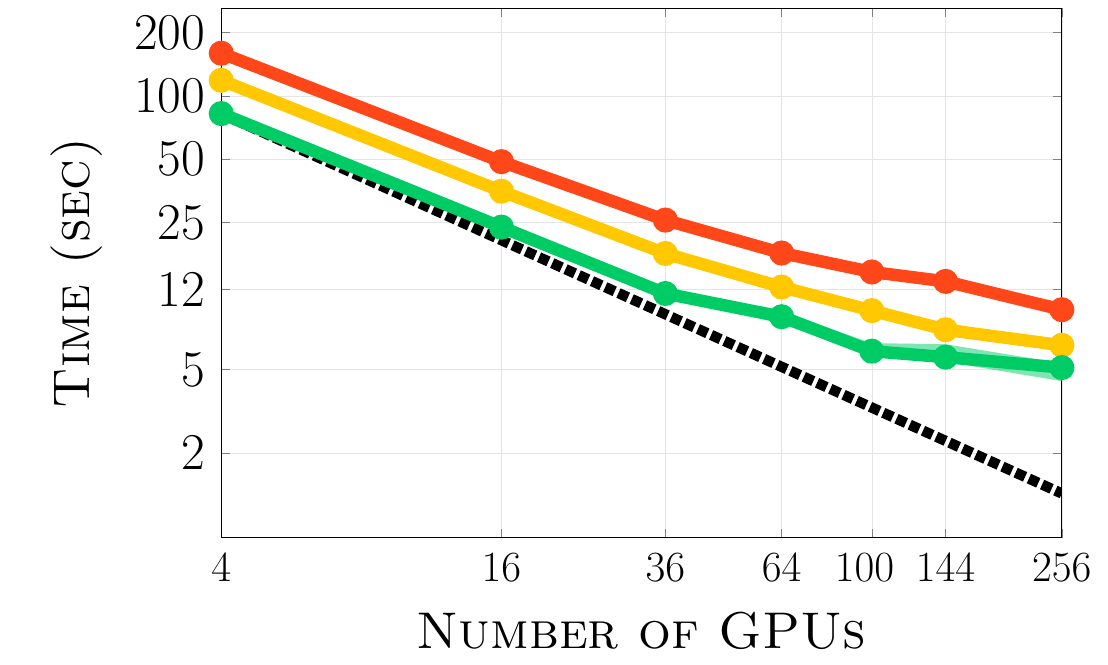}\captionsetup{justification=centering}
\end{minipage}
\hfill
\captionsetup{justification=centering}
\caption{Performance benchmark on the Si-39k matrix, $n = 79{,}488$ - $\nev = 794,1589,2384$.}
\label{fig::strong_scaling_medium}
\end{figure}

\begin{figure}[h!]
\centering
\hfill
\begin{minipage}[l]{0.48\textwidth}
\includegraphics[scale=0.325]{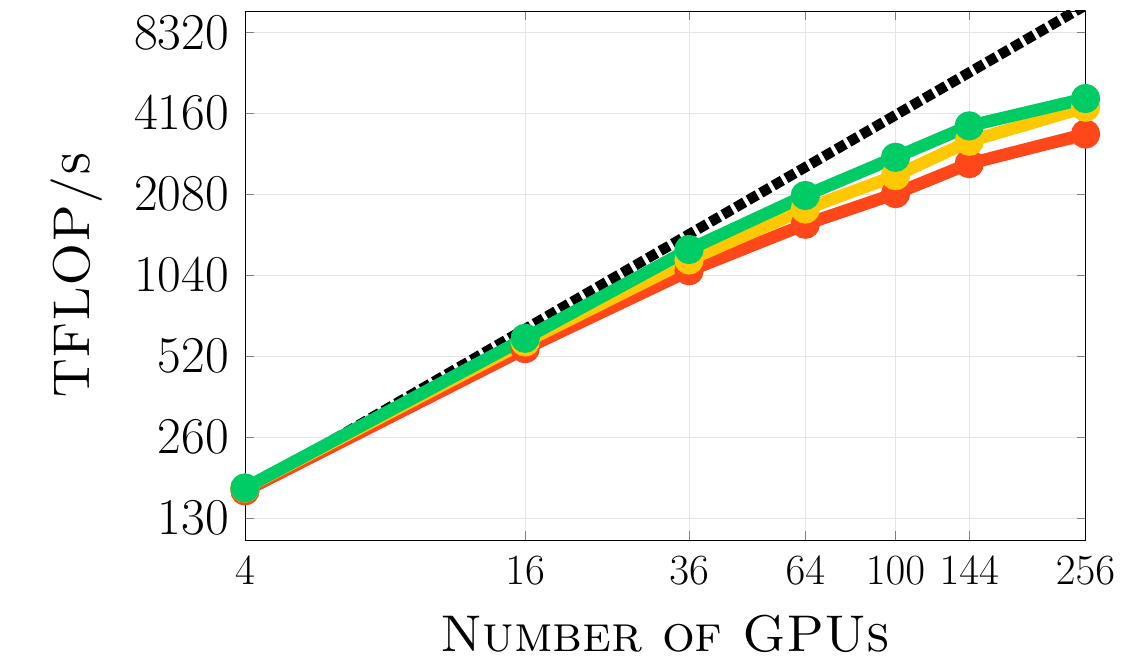}\captionsetup{justification=centering}
\end{minipage}
\hfill
\begin{minipage}[r]{0.48\textwidth}
\includegraphics[scale=0.325]{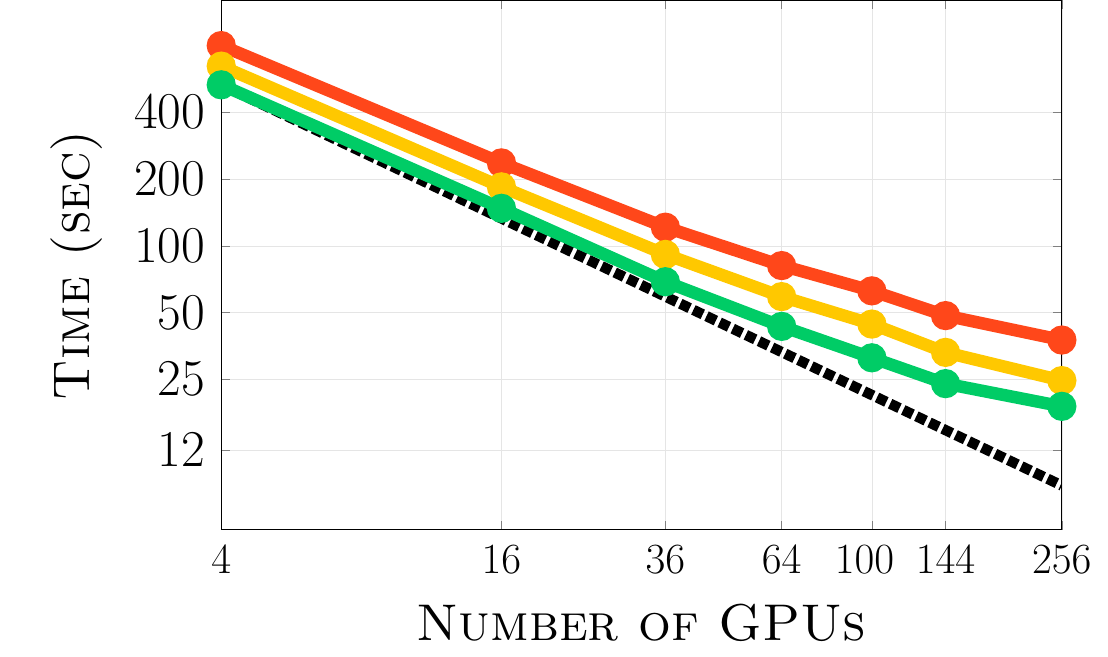}\captionsetup{justification=centering}
\end{minipage}
\hfill
\captionsetup{justification=centering}
\caption{Performance benchmark on the MoS$_2$-52k matrix, $n = 104{,}832$ - $\nev = 1048,2096,3144$}
\label{fig::strong_scaling_large}
\end{figure}
\NEW{\wurw{For the medium matrix ($n = 79{,}488$), Figure \ref{fig::strong_scaling_medium} shows that ChASE extracts $794$ of the smallest positive eigenpairs in $5.1$ seconds with $256$ GPUs, achieving $2.7$ PFLOP/s. When computing three times more eigenpairs ($\nev = 2{,}384$), the runtime doubles to $9.3$ seconds, with a comparable performance of $2.6$ PFLOP/s. For the large matrix ($n = 104{,}832$), Figure \ref{fig::strong_scaling_large} reports $3.5$ and $4.6$ PFLOP/s for $1{,}048$ and $3{,}144$ internal positive eigenpairs, respectively, with execution times ranging from $18.8$ to $37.3$ seconds on $256$ GPUs. For context, \cite[figure 4]{milev_performances_2025} reports a performance comparison experiment between SLEPc and ELPA on the Leonardo supercomputer equipped with A100 GPUs. ELPA requires over $7$ minutes to diagonalize a smaller matrix ($n = 72{,}576$) in single precision using $64$ GPUs, while SLEPc extracts $100$ eigenpairs in roughly $30$ seconds. On the JUPITER machine with H100 GPUs, which deliver usually twice the FP32 performance of A100s in practice, ChASE computes a larger number of eigenpairs in a comparable time scale. This highlights the excellent scalability and efficiency of ChASE on modern GPU-based architectures.}}

 \NEW{\wurw{The achieved effective FLOP/s relative to the theoretical peak performance on a single node (4 GPUs) exceeds 60\% in all cases, demonstrating efficient use of the hardware. In the strong-scaling experiments, as the number of GPUs increases to $256$, parallel efficiency for the larger case remains around 30\% (see Figure \ref{fig::par_efficiency}). Correspondingly, the average Streaming Multiprocessor (SM) utilization decreases from 88\% on a single node to 20\% on $256$ nodes, reflecting the reduced workload per GPU inherent to the strong-scaling regime. Despite this drop, ChASE continues to deliver consistent runtimes and robust performance, highlighting the scalability and effectiveness of the solver across large GPU clusters.}}

\begin{figure}[h!]
\centering
\hfill
\begin{minipage}[l]{0.48\textwidth}
\includegraphics[scale=0.325]{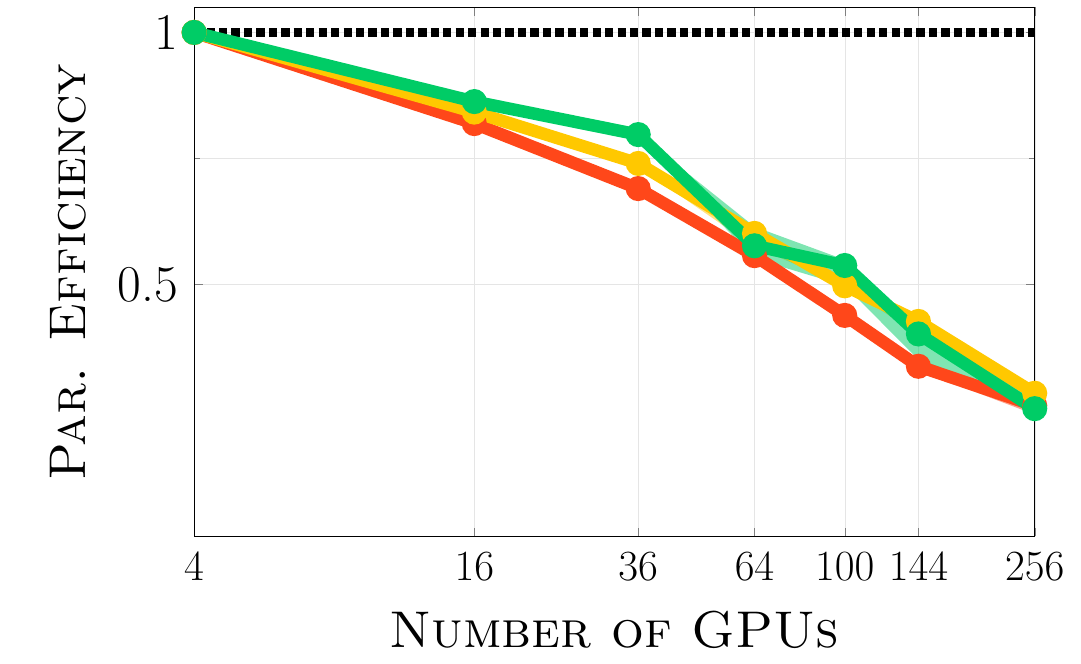}\captionsetup{justification=centering}
\end{minipage}
\hfill
\begin{minipage}[r]{0.48\textwidth}
\includegraphics[scale=0.325]{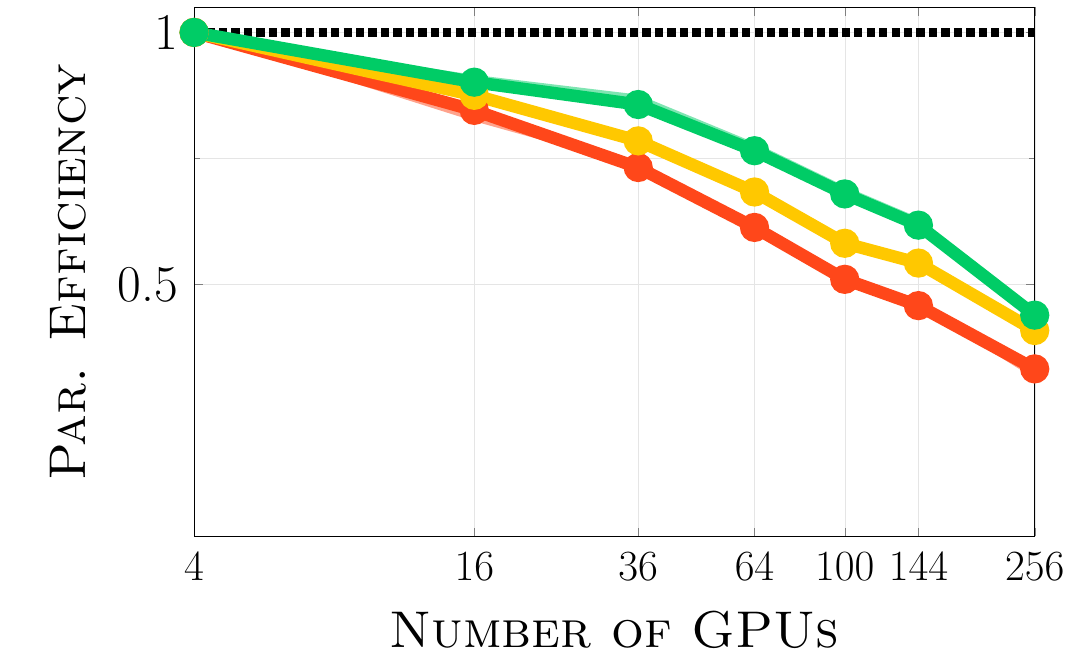}\captionsetup{justification=centering}
\end{minipage}
\hfill
\captionsetup{justification=centering}
\caption{Parallel Efficiency for the Si-39k (left) and MoS$_2$-52k (right) matrices.}
\label{fig::par_efficiency}
\end{figure}

For smaller matrices and when only a limited number of eigenpairs (e.g., at most around one hundred) are required, the results reported in \cite{milev_performances_2025} indicate that the Lanczos-based solver from SLEPc can achieve time-to-solution comparable to that of ChASE. \wurw{However, as the number of requested eigenpairs increases, Lanczos-based methods face significant performance limitations due to the cost of full reorthogonalization.} In contrast, direct solvers such as ELPA are highly efficient for full diagonalization, but may incur unnecessary computational cost when only a few thousand eigenpairs are of interest. In this landscape, ChASE—while itself an iterative subspace method—naturally targets an intermediate regime, offering a scalable and robust alternative for problems where neither Lanczos-based solvers nor full diagonalization are ideally matched to the computational requirements.

\section{Conclusion} \NEW{Computing the smallest eigenvalues} of large excitonic Hamiltonians is critical in many Condensed Matter applications and especially for the simulation of optoelectronic properties of complex materials, where the pseudo-hermitian formulations plays an important role in preventing the limiting approximation of the reduction to hermitian form (TDA). While recent numerical libraries have improved support for pseudo-hermitian solvers, the efficient calculation of several thousand \NEW{of the smallest positive} eigenpairs on modern heterogeneous architectures has remained an open challenge. This work extends ChASE to address this challenge by enabling the scalable computation of up to several thousand eigenpairs of large pseudo-hermitian Hamiltonians. \NEW{The essential filtering step is carried out using $H^2$ in order to fold the spectrum around zero and treat positive and negative eigenpairs symmetrically. By exploiting the structural properties of the pseudo-hermitian Hamiltonian, one half of the filtered subspace is directly recovered from the other, and }the parallel implementation of the Chebyshev filter minimizes global communication and achieves competitive performance and strong scalability through extensive use of optimized GPU kernels.  The proposed Rayleigh-Ritz formulation, based on an implicit dual basis, yields a Rayleigh-Quotient that is spectrally equivalent to an hermitian matrix, \NEW{preserves the characteristic positive-negative spectral symmetry of the Hamiltonian}, and \NEW{achieves} quadratic convergence of the Ritz values, similarly to the hermitian case. A rigorous numerical analysis is accompanied by a carefully designed set of experiments to demonstrate that ChASE provides an effective and scalable solution for large-scale pseudo-hermitian eigenproblems.

\bibliographystyle{siamplain}

\bibliography{no_url_references.bib}

\end{document}